\titleformat{\subsection}[runin]{\normalfont\large\bfseries}{\thesubsection}{6pt}{}[]
\titleformat{\subsubsection}[runin]{\normalfont\bfseries}{\thesubsubsection}{5pt}{}[]
\title{\textbf{\Huge{Classification of balanced toral elements of exceptional Lie algebras}}}
\author{Floriana Amicone}
\date{\vspace{-5ex}}
\theoremstyle{plain}
\newtheorem{teo}{Theorem}[section]
\newtheorem{prop}[teo]{Proposition}
\newtheorem{cor}[teo]{Corollary}
\theoremstyle{definition}
\begin{document}

\maketitle

\paragraph{Abstract:} Let $\mathfrak{g}$ be a simple Lie algebra of exceptional type over an algebraically closed field $k$, and let $G$ be a simple linear algebraic group with Lie algebra $\mathfrak{g}$. For $\mathrm{char} \, k =p >0$, we present a
complete classification of the $G$-conjugacy classes of balanced toral elements of $\mathfrak{g}$. As a result, we also obtain the classification of conjugacy classes of balanced inner torsion automorphisms of $\mathfrak{g}$ of order $p$ when $\mathrm{char} \, k =0$.

\section{Introduction and statement of results}

Let $\mathfrak{g}$ be the Lie algebra of a connected algebraic group $G$, defined over a field $k = \overline{k}$ of positive characteristic $p$.  

For all $x \in \mathfrak{g}$ the derivation $(\mathrm{ad} \, x)^p$ is still inner and equals $\mathrm{ad} \, x^{[p]}$ for a certain $ x^{[p]} \in \mathfrak{g}$. Therefore, the Lie algebra $\mathfrak{g}$ is canonically endowed with a $G$-equivariant $p$-semilinear $[p]$-th power map:
\begin{displaymath}
x \in \mathfrak{g} \longmapsto x^{[p]} \in \mathfrak{g}.
\end{displaymath}

An element $h \in \mathfrak{g}$ is said to be \textit{toral} if it verifies $h^{[p]}=h$. Such an element is semisimple and acts on $\mathfrak{g}$ with eigenvalues belonging to the finite field $\mathbb{F}_p \subseteq k$, because the minimal polynomial of $\mathrm{ad} \, h$ divides the polynomial $X^P-X$. We will note  $\mathfrak{g}(h,i)=\{ x \in \mathfrak{g} \, | \, [h,x]=ix \}$ the eigenspace relative to the eigenvalue $i \in \mathbb{F}_p$.

A toral element $h$ is called \textit{balanced} if the eigenspaces $\mathfrak{g}(h,i)$ corresponding to nonzero eigenvalues are all of the same dimension. If in addition their dimension is divisible by a certain $d \in \mathbb{N}$, the element is called \textit{$d$-balanced} (see section \ref{Preli} for explicit notation).

\subsection{} 
Premet (\cite{ModularMorozov}) gave a complete account on $p$-balanced toral elements of exceptional Lie algebras, where $p$ is a good prime for the root system of $\mathfrak{g}$. He observed that there is a natural way of associating a nilpotent orbit (more specifically, a Richardson orbit) to a $p$-balanced toral element $h$. Indeed, this can be done by inducing from the $0$-orbit on the centralizer of $h$ in $\mathfrak{g}$, which is a Levi subalgebra of $\mathfrak{g}$. This orbit enjoys some particular properties, notably its dimension is divisible by $p(p-1)$ and its elements belong to the restricted nullcone $\mathcal{N}_p(\mathfrak{g})$. Hence, one can resort to case-by-case computations involving Richardson orbits of this type, since they are not too many.

The purpose of this work is to classify all balanced toral elements of a simple Lie algebra $\mathfrak{g}$ of exceptional type in positive characteristic $p$. Therefore, less restrictive hypothesis will be taken: we allow $d$ to be any nonnegative integer and we drop the assumption of $p$ being a good prime for the root system of $\mathfrak{g}$. In characteristic $2$ and $3$ every toral element is automatically balanced, therefore the only nontrivial bad prime to consider is $p=5$ for the root system of type $E_8$; this will be examined after settling the good characteristic case. Some of the features used in \cite{ModularMorozov} can be adapted to this more general setting. For instance, one can perform the same construction of a Richardson orbit from a conjugacy class of balanced toral elements (at least in good characteristic), but under our assumptions the dimension of the orbit has to be divisible only by $p-1$ (see section \ref{Class}).

\subsection{}
The description of conjugacy classes of balanced elements will be given in terms of Kac coordinates. These are a sequence of nonnegative integers introduced by Kac (\cite[Chapter 8]{Kac-InfLieAlg}) in order to classify finite order (or \textit{torsion}) automorphisms of a simple Lie algebra in characteristic $0$. This draws a connection between toral elements in characteristic $p>0$ and torsion automorphisms of order $p$ of a simple Lie algebra defined over $\mathbb{C}$, so that results valid for the former admit natural analogues holding for the latter.

Assuming $k = \mathbb{C}$, let $\mathfrak{g}_\mathbb{C}$ be a simple Lie algebra. The identity component of its automorphism group $(Aut \, \mathfrak{g}_\mathbb{C})^\circ = G_\mathbb{C}$ is an algebraic group of adjoint type with $\mathrm{Lie} \, G_\mathbb{C} = \mathfrak{g}_\mathbb{C}$, while the other irreducible components are laterals of the form $\varphi G_\mathbb{C}$, where $\varphi$ is a graph automorphism of the Dynkin diagram. Torsion automorphisms of $\mathfrak{g}_\mathbb{C}$ of order $m$ are in bijection with $\mathbb{Z}/ m \mathbb{Z}$-gradings on $\mathfrak{g}_\mathbb{C}$. Moreover, a finite order automorphism is inner (that is, it belongs to $G_\mathbb{C}$) if and only if a maximal toral subalgebra of $\mathfrak{g}_\mathbb{C}$ is pointwise fixed.

Our case of interest is the grading of $\mathfrak{g}$ induced by the adjoint action of a toral element $h$. This is a $\mathbb{Z}/ p \mathbb{Z}$-grading in characteristic $p$, for which any Cartan subalgebra containing $h$ is included in the $0$-degree component. Although automorphisms of order $m$ are in bijection with $\mathbb{Z}_m$-gradings on $\mathfrak{g}$ only when $m$ and $p$ are relatively prime, at least in the inner case Kac coordinates can still be used over fields of positive characteristic to parameterize embeddings of the affine group scheme $\bm{\mu}_p$ in $G$ (\cite{Serre-coo}). It turns out that, in characteristic $p>0$, Kac coordinates relative to such embeddings of $\bm{\mu}_p$ parameterize conjugacy classes of toral elements of $\mathfrak{g}$.

\subsection{} Section \ref{Class} contains the description of the algorithm. Each balanced toral element yields a Richardson orbit of dimension divisible by $p-1$; since there are only finitely many of these orbits, it is possible to check them one by one. Roughly speaking, we seek tuples of Kac coordinates that provide a balanced element. In many instances one can rule out straightaway certain choices of Kac coordinates using combinatorial arguments. Unfortunately this approach is unsuccessful or too involved when both the dimension of the nilpotent orbit and the characteristic are big. In order to tackle all the possible cases, we implemented the algorithm on the coding language C (\cite{GNUManual}).

As remarked, manual computations are doable and effective in almost all cases. Before realising the trickiest cases could be dealt with by using a computer, we had proven that for exceptional Lie algebras there is a natural injection from the set of connected subgraphs of the extended Dynkin diagram to the set of positive roots $\Phi^+$. A proof of this can be found in section \ref{FAlternative}, along with examples of how we used it.

\subsection{} It emerges from the Tables in section \ref{Tables} that sometimes there exist more than one conjugacy class of balanced toral elements in char $p$ having isomorphic (and $G$-conjugate) centralizers. Moreover, if $h$ is balanced, $rh$ is balanced for any $r \in \mathbb{F}_p^\times$. One may ask whether $rh$ is still conjugate to $h$. Combining the arguments in sections \ref{Class} and \ref{ScalarMultiples} we prove the following:

\begin{teo} \label{ScalarM}
Let $G$ be a connected reductive algebraic group of exceptional type over a field $k$ of characteristic $p>0$ and let $\mathfrak{g}=\mathrm{Lie} \, G$.
\begin{itemize}
\item[$(a)$] The Kac coordinates of $G$-orbits of balanced toral elements are listed in \cref{G2,F4,E6,E7,E8}.
\item[$(b)$] Let $h, h^\prime \in \mathfrak{g}$ be balanced toral elements with isomorphic centralizers. There exists $r \in \mathbb{F}_p^\times$ such that $rh$ is conjugate to $h^\prime$ under an automorphism of $\mathfrak{g}$.
\end{itemize}
\end{teo}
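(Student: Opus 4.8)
The plan is to obtain part~$(a)$ from the algorithm of Section~\ref{Class} and then deduce part~$(b)$ from that classification together with an analysis of the $\mathbb{F}_p^\times$-action on toral elements.

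For~$(a)$: if $h$ is a balanced toral element, then $\mathfrak{g}(h,0)$ is (in good characteristic) a Levi subalgebra $\mathfrak{l}$, and since its $p-1$ nonzero eigenspaces all share a common dimension $d$ one has $\dim\mathfrak{g}-\dim\mathfrak{l}=(p-1)d$. Inducing the zero orbit of $\mathfrak{l}$ yields a Richardson orbit lying in $\mathcal{N}_p(\mathfrak{g})$ of dimension $\dim\mathfrak{g}-\dim\mathfrak{l}$, hence divisible by $p-1$. As there are finitely many conjugacy classes of Levi subalgebras and for each of them this divisibility restricts $p$ to finitely many values, one is left with a finite list of pairs $(\mathfrak{l},p)$. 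For each pair I would enumerate the admissible tuples of Kac coordinates --- equivalently the toral elements with $\mathfrak{g}(h,0)=\mathfrak{l}$ --- and test the balanced condition by counting the roots $\alpha\in\Phi$ in each residue class of $\alpha(h)$ modulo $p$; the injection from connected subdiagrams of the extended Dynkin diagram into $\Phi^+$ proved in Section~\ref{FAlternative} makes this tractable by hand except when $p$ and $\dim\mathfrak{l}$ are both small, and those cases are settled by the C implementation. Characteristics $2$ and $3$ are immediate since there every toral element is balanced, and the bad-prime case $p=5$ for $E_8$ is treated at the end.

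For~$(b)$: note first that $rh$ is again toral, because $(rh)^{[p]}=r^ph^{[p]}=rh$, and that $\mathfrak{g}(rh,i)=\mathfrak{g}(h,r^{-1}i)$, so $rh$ is balanced and $\mathfrak{g}(rh,0)=\mathfrak{g}(h,0)$; thus $\mathbb{F}_p^\times$ permutes balanced toral elements while fixing their centralizers. By part~$(a)$ --- from which one reads off that the Levi subalgebras occurring as centralizers of balanced toral elements are determined up to conjugacy by their isomorphism type --- we may apply an automorphism of $\mathfrak{g}$ so that $\mathfrak{g}(h,0)=\mathfrak{g}(h^\prime,0)=\mathfrak{l}$ for a fixed Levi $\mathfrak{l}$. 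Then $h$ and $h^\prime$ both lie in the centre $\mathfrak{z}(\mathfrak{l})$ of $\mathfrak{l}$ and are toral elements there whose centralizer in $\mathfrak{g}$ equals $\mathfrak{l}$. It remains to prove that such elements form a single orbit under the group generated by the scalings $\mathbb{F}_p^\times$ and $N_{\mathrm{Aut}\,\mathfrak{g}}(\mathfrak{l})$, the latter acting on $\mathfrak{z}(\mathfrak{l})$ through the relative Weyl group (and, for $E_6$, the diagram involution). In terms of Kac coordinates, scaling $h$ by $r$ multiplies the tuple componentwise by $r$ modulo $p$ and then renormalizes via the affine Weyl group, while $N_{\mathrm{Aut}\,\mathfrak{g}}(\mathfrak{l})$ acts through the symmetries of the affine diagram that preserve the subdiagram cut out by $\mathfrak{l}$; one then checks that, for each centralizer type, all the Kac-coordinate tuples listed in \cref{G2,F4,E6,E7,E8} lie in a single such orbit.

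The hard part will be $(b)$ in exactly the situations flagged before the statement, where several balanced classes that are not $G$-conjugate share a centralizer: there one must exhibit the scalar $r$, and where necessary a graph automorphism, identifying them, which requires following the affine-Weyl renormalization of the scaled Kac coordinates rather than merely comparing rows of the tables --- carrying this out in a uniform way is the delicate point.
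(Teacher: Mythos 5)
Your part~$(a)$ follows the paper's own strategy essentially verbatim (Richardson orbit of dimension divisible by $p-1$, enumeration of admissible centralizers and Kac coordinates, the subdiagram-to-root injection of Section~\ref{FAlternative} for hand computations, the C program for the rest, and separate treatment of $p=2,3$ and of $E_8$ with $p=5$), so there is nothing to compare there. For part~$(b)$, however, you take a genuinely different route. The paper (Section~\ref{ScalarMultiples}) argues mostly without touching the alcove: since $h$ is semisimple some generator $f_i$ of $k[\mathfrak{g}]^G$ satisfies $f_i(h)\neq 0$, and if $rh\sim h$ for \emph{all} $r\in\mathbb{F}_p^\times$ then $r^{d_i}=1$ for all $r$, which is impossible once $p-1>\max_i d_i$; combined with the observation that the degree-$2$ invariant always vanishes on a balanced element (used to refine the $E_6$, $p=11$ case) and a counting argument with the sets $X_i$ for the four classes in $E_6$, $p=37$, this disposes of all the large-prime cases abstractly, and explicit extended-affine-Weyl computations are only needed for a handful of small-prime cases ($E_6$ in characteristics $3$ and $7$, $E_8$ in characteristic $5$). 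You instead propose a uniform, purely combinatorial verification: scale each tabulated Kac tuple by $r$, renormalize into the fundamental alcove via $\widetilde{W}$ (and the $E_6$ diagram involution), and check that tuples with the same centralizer type coalesce. This is logically sound --- the identities $(rh)^{[p]}=rh$ and $\mathfrak{g}(rh,i)=\mathfrak{g}(h,r^{-1}i)$ are correct, and part~$(a)$ reduces $(b)$ to a finite check on table entries --- and it has the virtue of producing the explicit scalar $r$ and conjugating data in every case, whereas the paper's invariant argument is purely existential for the cases it covers. The price is that the alcove renormalizations you defer to ``one then checks'' are exactly the laborious part for the large primes ($G_2$, $p=13$; $F_4$, $p=17$; $E_6$, $p=19,37$; $E_8$, $p=41,61$), where the paper deliberately avoids them; by hand this is substantially heavier than anything the paper does, though it could be automated. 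One small gloss: your claim that the centralizer's isomorphism type determines it up to conjugacy is false in general for Levi subalgebras of $E_7$ (three exceptional pairs), and is only harmless here because no isomorphism type occurs twice in the $E_7$ table for a fixed $p$; that should be said rather than read off silently.
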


\noindent An equivalence relation that extends $G$-conjugacy can be defined on balanced toral elements by saying that $h$ and $h^\prime$ are in relation if there exists $r \in \mathbb{F}_p^\times$ such that $rh$ is conjugate to $h^\prime$ under an element of $Aut(\mathfrak{g})$. Theorem \ref{ScalarM} states that in chararacteristic $p$, once the type of centralizer is fixed, there exists at most one equivalence class for this relation.

\subsection{} These results have an analogous interpretation in the setting of torsion automorphisms of order $p$ of a complex Lie algebra $\mathfrak{g}_\mathbb{C}$.

Let $\sigma \in Aut(\mathfrak{g})$ be an element of order $p >0$, so that the Lie algebra decomposes as $\mathfrak{g}= \bigoplus_{i \in \mathbb{F}_p} \mathfrak{g}(\sigma, i)$, where $\mathfrak{g}(\sigma, i)= \{ x \in \mathfrak{g} \, | \, \sigma(x)= \xi^i x \}$, for $\xi \in \mathbb{C}$ a fixed primitive $p$-th root of $1$. $\sigma$ is said to be balanced if $\dim \mathfrak{g}(\sigma, i) = \dim \mathfrak{g}(\sigma, 1)$ for all $i \in \mathbb{F}_p^\times$. The analogue of Theorem \ref{ScalarM} holds:

\begin{teo}
Let $\mathfrak{g}_\mathbb{C}$ a simple Lie algebra of exceptional type over $\mathbb{C}$, $G_\mathbb{C}$ a simple linear algebraic group with Lie algebra $\mathfrak{g}_\mathbb{C}$ and $Aut (\mathfrak{g}_\mathbb{C})$ the automorphisms group of $\mathfrak{g}_\mathbb{C}$.
\begin{itemize}
\item[$(a)$] The Kac coordinates of $G_\mathbb{C}$-orbits of balanced automorphisms of order $p>0$, for $p \in \mathbb{N}$ prime, are listed in \cref{G2,F4,E6,E7,E8}.
\item[$(b)$] Let $\sigma, \sigma^\prime \in \mathfrak{g}$ be balanced torsion automorphisms with $G_\mathbb{C}$-conjugate fixed points subalgebras. There exists $r \in \mathbb{F}_p^\times$ such that $\sigma^r$ is conjugate to $\sigma^\prime$ under an element of $Aut(\mathfrak{g}_\mathbb{C})$.
\end{itemize}
\end{teo}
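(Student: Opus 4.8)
The plan is to deduce this complex-field statement from the positive-characteristic Theorem~\ref{ScalarM} via the standard dictionary between inner torsion automorphisms of order $p$ of $\mathfrak{g}_\mathbb{C}$ and toral elements of the corresponding Lie algebra in characteristic $p$. Concretely, both objects are parameterized by the same combinatorial data: conjugacy classes of homomorphisms $\bm{\mu}_p \to G$ (equivalently, admissible tuples of Kac coordinates summing, with the appropriate marks, to $p$), as recalled in the introduction via \cite{Serre-coo} and \cite{Kac-InfLieAlg}. First I would make this correspondence explicit: to an inner $\sigma$ of order $p$ one associates its $\mathbb{Z}/p\mathbb{Z}$-grading $\mathfrak{g}_\mathbb{C} = \bigoplus_{i \in \mathbb{F}_p} \mathfrak{g}_\mathbb{C}(\sigma,i)$ with $\mathfrak{g}_\mathbb{C}(\sigma,i)$ the $\xi^i$-eigenspace; on the other side, to a toral $h$ one associates the eigenspace grading $\mathfrak{g} = \bigoplus_{i \in \mathbb{F}_p} \mathfrak{g}(h,i)$. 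The key point is that the same Kac coordinates produce gradings with matching graded dimensions, because the graded pieces are spanned by the same sets of root spaces (and a fixed-dimensional piece of the Cartan) indexed by residues of heights modulo $p$; hence $\dim_{\mathbb{C}} \mathfrak{g}_\mathbb{C}(\sigma,i) = \dim_k \mathfrak{g}(h,i)$ for all $i$. In particular $\sigma$ is balanced if and only if the associated $h$ is balanced, and their fixed-point/centralizer subalgebras correspond: $\mathfrak{g}_\mathbb{C}(\sigma,0)$ and $\mathfrak{g}(h,0) = \mathfrak{c}_{\mathfrak{g}}(h)$ are cut out by the same sub-root-system, so $G_\mathbb{C}$-conjugacy of the former matches $G$-conjugacy of the latter.

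With the dictionary in place, part~$(a)$ is immediate: the tables \cref{G2,F4,E6,E7,E8} list the Kac coordinates of all $G$-orbits of balanced toral elements by Theorem~\ref{ScalarM}$(a)$, and by the equivalence just described these are exactly the Kac coordinates of the $G_\mathbb{C}$-orbits of balanced inner automorphisms of order $p$. One should note that for the primes $p=2,3$ every toral element (resp.\ every order-$p$ automorphism) is balanced, and for $p=5$ in type $E_8$ the bad-prime analysis carries over verbatim through the same coordinates; I would state this so the tables are read consistently in all characteristics. For part~$(b)$, the operation $h \mapsto rh$ for $r \in \mathbb{F}_p^\times$ on the toral side corresponds, under the dictionary, precisely to $\sigma \mapsto \sigma^r$ on the automorphism side — replacing the cocharacter-type datum by its $r$-th power rescales all eigenvalue labels by $r$, i.e.\ permutes the graded pieces $\mathfrak{g}_\mathbb{C}(\sigma,i) \mapsto \mathfrak{g}_\mathbb{C}(\sigma, ri)$, which is exactly the effect of raising $\sigma$ to the $r$-th power. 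So if $\sigma,\sigma'$ have $G_\mathbb{C}$-conjugate fixed-point subalgebras, the corresponding toral $h,h'$ have isomorphic (indeed conjugate) centralizers, Theorem~\ref{ScalarM}$(b)$ gives $r \in \mathbb{F}_p^\times$ with $rh$ conjugate to $h'$ under $Aut(\mathfrak{g})$, and translating back yields $\sigma^r$ conjugate to $\sigma'$ under $Aut(\mathfrak{g}_\mathbb{C})$.

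I expect the main obstacle to be checking that the correspondence is faithful on conjugacy classes and compatible with the \emph{full} automorphism group $Aut(\mathfrak{g}_\mathbb{C})$, not just $G_\mathbb{C}$. The subtlety is twofold: first, one must ensure that distinct Kac coordinates (up to the diagram-automorphism symmetry used in \cite{Kac-InfLieAlg}) give non-conjugate objects on both sides simultaneously, so that the bijection of combinatorial data descends to a bijection of conjugacy classes with the matching invariants; second, the outer part of $Aut(\mathfrak{g})$ must be handled in parallel with that of $Aut(\mathfrak{g}_\mathbb{C})$ — here one uses that for the exceptional types only $E_6$ has nontrivial diagram automorphisms, and that the action of these graph automorphisms on Kac coordinates is the same combinatorial flip in both characteristic $0$ and characteristic $p$, so the conjugacy-under-$Aut$ statements transfer directly. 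Once these compatibilities are verified (mostly by invoking the Kac/Serre classification already cited), the theorem follows with no further computation. A one-line remark reconciling the minor typo in the statement (``$\sigma,\sigma' \in \mathfrak{g}$'' should read ``$\sigma,\sigma' \in Aut(\mathfrak{g}_\mathbb{C})$'') would also be appropriate.
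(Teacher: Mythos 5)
Your proposal is correct and follows essentially the same route as the paper: the paper deduces this theorem from Theorem~\ref{ScalarM} via the Kac-coordinate dictionary (Serre's $\bm{\mu}_p$-embeddings, sections~\ref{InnerTorsion}--\ref{ToralKac}) identifying inner order-$p$ automorphisms over $\mathbb{C}$ with toral elements in characteristic $p$, with matching graded dimensions, fixed-point subalgebras, and the correspondence $\sigma^r \leftrightarrow rh$, exactly as you describe. The paper leaves this transfer implicit rather than writing it out, so your more explicit verification (including the treatment of the outer component in type $E_6$) is simply a fuller account of the same argument.
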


\paragraph{Acknowledgements} I am extremely grateful to Alexander Premet for the guidance and advice received while working at this paper, and for giving me precious suggestions on how to improve a preliminary version.

\section{Notation and Preliminaries} \label{Preli}

\subsection{General setting.} 
Let $\mathfrak{g}$ be the Lie algebra of a connected reductive algebraic group $G$, defined over an algebraically closed field $k$ of characteristic $p>0$; $G$ acts on its Lie algebra via the adjoint action. Unless otherwise specified, $\mathfrak{g}$ and $G$ will be of exceptional type and will satisfy the \textit{standard hypothesis}:
\begin{itemize}
\item $p$ is a good prime for $G$;
\item the derived subgroup of $G$ is simply connected;
\item $\mathfrak{g}$ admits a nondegenerate $G$-equivariant symmetric bilinear form $\kappa(\cdot,\cdot): \mathfrak{g} \times \mathfrak{g} \rightarrow k$.
\end{itemize}

\noindent Let $\Phi$ be the root system of $G$ and $\Delta=\{ \alpha_1, \ldots, \alpha_l \}$ a base of simple roots.  If $\widetilde{\alpha}_0 = \sum_{i=1}^{l} b_i \alpha_i$ is the highest root of $\Phi$, we say that $p$ is a \textit{good prime} for $G$ if it does not divide any of the integers $b_i$, for $i=1, \ldots, l$. When $G$ is of exceptional type, the standard hypothesis are satisfied if and only if $p$ is a good prime for $G$. If $p$ is a good prime for $G$, it is a good prime for any Levi subgroup of $G$.

In what follows $\mathfrak{g}_\alpha \subseteq \mathfrak{g}$ will denote the $1$-dimensional root subspace for a fixed root $\alpha \in \Phi$. Whenever we want to refer to a simple exceptional algebraic group and its Lie algebra defined over $\mathbb{C}$ (but also other notions, e.g. orbits), we will simply add a subscript, as in $\mathfrak{g}_\mathbb{C}$ and $G_\mathbb{C}$.

Retain notations from the introduction. We saw that a \textit{toral} element $h=h^{[p]}$ decomposes $\mathfrak{g}$ as a direct sum $\mathfrak{g}=\mathfrak{g}(h,i)$, where $\mathfrak{g}(h,i)$ is the eigenspace of eigenvalue $i \in \mathbb{F}_p$ for $\mathrm{ad} \, h$.

Recall that a toral element $h$ is said to be \textit{balanced} if the dimension of $\mathfrak{g}(h,i)$ for $i \neq 0$ is independent of $i$; more explicitly $\dim \mathfrak{g}(h,i) = \dim \mathfrak{g}(h,1)$ for all $i \in \mathbb{F}_p^\times$. If a positive integer $d$ divides $\dim \mathfrak{g}(h,i)$, then $h$ is called \textit{$d$-balanced}.

\subsection{Induced nilpotent orbits} \hfill\break

Let $\mathfrak{p}$ be a parabolic subalgebra of $\mathfrak{g}$ that admits a Levi decomposition $\mathfrak{p}=\mathfrak{l} \oplus \mathfrak{n}$, where $\mathfrak{l}$ is a Levi subalgebra and $\mathfrak{n}$ is the nilradical of $\mathfrak{p}$. Let $\mathcal{O}_\mathfrak{l}$ be a nilpotent orbit in $\mathfrak{l}$. There is a unique nilpotent $G$-orbit $\mathcal{O}_\mathfrak{g}$ in $\mathfrak{g}$ meeting $\mathcal{O}_\mathfrak{l}+\mathfrak{n}$ in an open dense subset; this is because the Lie algebra $\mathfrak{g}$ admits only a finite number of nilpotent $G$-orbits in any characteristic (see \cite{JantzenNilp}, for example). The orbit $\mathcal{O}_\mathfrak{g}$ is \textit{induced} by $\mathcal{O}_\mathfrak{l}$ in the sense of \textit{Lusztig-Spaltenstein}. If $\mathcal{O}_\mathfrak{l}=\{ 0 \}$, $\mathcal{O}_\mathfrak{g}$ is called a \textit{Richardson orbit}.

The orbit induced by $\mathcal{O}_\mathfrak{l}$ depends only on the Levi subalgebra $\mathfrak{l}$,  not on the choice of a parabolic subalgebra containing it; for this reason it will be denoted $\mathrm{Ind}^\mathfrak{g}_\mathfrak{l}(\mathcal{O}_\mathfrak{l})$.
There exist nilpotent orbits that can be induced in more than one way; an orbit which is not induced is called \textit{rigid} (\cite{LusztSpalt}).

\subsection{Sheets} \hfill\break

Let $m \in \mathbb{Z}$ be fixed, and let $\mathfrak{g}_{(m)}$ be the set of elements in $\mathfrak{g}$ whose centralizer in $G$ has dimension $m$. The irreducible components of the $\mathfrak{g}_{(m)}$ are called the \textit{sheets} of $\mathfrak{g}$.
In other words, the sheets of $\mathfrak{g}$ are the irreducible components of each of the varieties consisting of the union of all $G$-orbits of the same dimension.

\subsubsection{Decomposition classes.} \label{rigiddec}

A simple Lie algebra admits only a \textit{finite number} of sheets. Moreover, sheets can be parameterized by $G$-conjugacy classes of pairs $(\mathfrak{l}, (\mathrm{Ad}L)e)$, where $\mathfrak{l}$ is a Levi subalgebra of $\mathfrak{g}$ and $(\mathrm{Ad} L)e$ (or, more briefly, $L e$) is a \textit{rigid} nilpotent orbit on $\mathfrak{l}$. These results for groups satisfying the standard hypothesis follow from \cite{RigidSheets}, we briefly summarize how the correspondence works as it will be useful for building the theory in later sections.

\vspace{2mm}
Let $\mathfrak{l}$ be a Levi subalgebra of $\mathfrak{g}$, $\mathfrak{z(l)}_{reg}$ be the set of all elements $x$ in the centre of $\mathfrak{l}$ such that $\mathfrak{l}=\mathfrak{c_g}(x)$ and let a nilpotent element $e_0 \in \mathfrak{l}$ be fixed. The \textit{decomposition class} of
$\mathfrak{g}$ associated with the pair $(\mathfrak{l}, e_0)$ is defined as:

\begin{displaymath}
\mathcal{D}(\mathfrak{l}, e_0)= (Ad \, G) \cdot (e_0 + \mathfrak{z(l)}_{reg}).
\end{displaymath}

\noindent There exist finitely many decomposition classes and each of them is included in one of the sheets, therefore every sheet $\mathcal{S}$ contains a unique open decomposition class.

\vspace{2mm}
Nonetheless, \textit{every sheet contains a unique nilpotent orbit}. Again, this holds for groups satisfying the standard hypothesis and it has been proven in \cite[Proposition 2.5]{RigidSheets}. More generally, the closure of every decomposition class $\mathcal{D}(\mathfrak{l},e_0) $ contains a unique \textit{open} nilpotent orbit $\mathcal{O}(e)$, that intersects densely with $L e_0 + \mathfrak{n}_+$ (\cite[Theorem 2.3 and Proposition 2.5]{RigidSheets}). This means in particular that $\mathcal{O}(e)$ is \textit{the orbit obtained from $L e_0$ by Lusztig-Spaltenstein induction}: if $\mathcal{D}(\mathfrak{l},e_0)$ is the open decomposition class of $\mathcal{S}$, the unique nilpotent orbit in $\mathcal{S}$ is $\mathrm{Ind}_{\mathfrak{l}}^{\mathfrak{g}} (L e_0)$.

\subsection{Sheet diagrams and Richardson orbits}

\subsubsection{Weighted Dynkin diagrams.}
It is well-known that over $\mathbb{C}$ (and in characteristic $p \gg 0$), a nilpotent orbit $\mathcal{O}_\mathbb{C}$ is uniquely determined by its \textit{weighted Dynkin diagram}, a way of labelling the Dynkin diagram described hereafter. An element $e \in \mathcal{O}_\mathbb{C}$ is included in an \mbox{$\mathfrak{sl}_2$-triple} $(h,e,f)$ of $\mathfrak{g}_\mathbb{C}$, uniquely determined up to $G_\mathbb{C}$-conjugacy by the $G_\mathbb{C}$-orbit of $h$ (and that of $e$).

There exist a maximal toral subalgebra $\mathfrak{t}_\mathbb{C}$ containing $h$ and a system of simple roots $\Delta$ such that $\alpha(h) \geq 0$ for all $\alpha \in \Delta$. There are not many possibilities for the value assumed by $\alpha(h)$ for $\alpha \in \Delta$:
it can only be equal to $0,1$ or $2$ (see \cite{CartFgLt}, for example).

Let $\Delta=\{\alpha_1, \ldots, \alpha_l\}$; we label the node corresponding to $\alpha_i$ in the Dynkin diagram of $\Phi$ with the integer $\alpha_i(h)$. This is the \textit{weighted Dynkin diagram} of the nilpotent orbit $\mathcal{O}_\mathbb{C}$.

\paragraph{} In the case where $\mathrm{char} \, k$ is a good prime for the root system of $\mathfrak{g}$, nilpotent orbits are still parameterized by the same weighted Dynkin diagrams (see \cite{CartFgLt}). Yet, they must be interpreted in a different way, since the theory of $\mathfrak{sl}_2$-triples fails when $\mathrm{char} \, k$ is not sufficiently big: see \cite[Section 2]{KempfRouss-Theo}, for example.

Let $e \in \mathcal{N}(\mathfrak{g})$, the nilpotent cone of $\mathfrak{g}$; there exists a one-parameter subgroup $\lambda$ optimal for $e$ in the sense of the Kempf-Rousseau theory, so that $e \in \mathfrak{g}(\lambda,2)$, where $\mathfrak{g}(\lambda,i)= \{ x \in \mathfrak{g} | (Ad \, \lambda(t)) x = t^i x \}$.

Up to replacing $e$ and $\lambda$ by conjugates, there exists a system of simple roots $\Delta=\{\alpha_1, \ldots, \alpha_l\}$ such that $\forall \; x \in \mathfrak{g}_{\alpha_i}$ one has $(Ad \, \lambda(t)) x = t^{r_i} x$ with $r_i \in \{0,1,2 \}$, for $i= 1, \ldots, l$. The Dynkin diagram labelled with the $r_i$, let us note it $D$, coincides with one of the weighted Dynkin diagrams relative to an orbit over $\mathbb{C}$. Moreover, any weighted Dynkin diagram over $\mathbb{C}$ can be obtained in this way, and $\dim_k \mathcal{O}(D) = \dim_\mathbb{C} \mathcal{O}_\mathbb{C}(D)$.

\subsubsection{Sheet diagrams.}
As remarked in \ref{rigiddec}, sheets are parameterized by $G$-orbits of pairs $(\mathfrak{l}, e_0)$, where $(\mathrm{Ad} L) e_0$ is rigid in $\mathfrak{l}$. \cite{InducedExceptionalOrbits} contains a list of rigid nilpotent orbits of exceptional Lie algebras in characteristic $0$ and their Dynkin diagrams (Tables $1$-$5$). Such information can be exploited in our setting as well: for $\mathfrak{g}$ of exceptional type, rigid orbits in good characteristic admit the same weighted Dynkin diagram as their counterparts over $\mathbb{C}$ (\cite[Theorem 3.8]{RigidSheets}).

\paragraph{} Let $\mathcal{S}$ be the sheet associated to the $G$-orbit of pairs $(\mathfrak{l}, e_0)$. Up to conjugacy, we may assume that $\mathfrak{l}$ is a standard Levi subalgebra of a standard parabolic subalgebra $\mathfrak{p}=\mathfrak{p}_\Pi$ corresponding to a subset $\Pi \subseteq \Delta$. The \textit{sheet diagram} (\cite{InducedExceptionalOrbits}) is a labelling of the Dynkin diagram obtained by applying the procedure hereafter. Nodes corresponding to roots in $\Delta \setminus \Pi$ are labelled $2$. As $\Pi$ is a system of simple roots for the root system of $\mathfrak{l}$, the nodes corresponding to roots in $\Pi$ are labelled just as in the weighted Dynkin diagram of the $L$-orbit of $e$.

A nilpotent orbit is Richardson if and only if the sheet diagram does not contain any node labelled $1$ \cite[5.1]{InducedExceptionalOrbits}.

 Two standard Levi subalgebras can be $G$-conjugated, and therefore the sheet diagram of a Richardson orbit is determined up to conjugation of the standard Levi subalgebra corresponding to the choice of simple roots with label $0$. Nevertheless, a certain orbit can have more than one sheet diagram in the case it belongs to more than one sheet.

\subsubsection{} In order to achieve the classification of balanced toral elements, we will look in detail at Richardson orbits of exceptional Lie algebras, and in particular at the structure of the Levi subalgebra from which they are induced. This latter describes the centralizer of the balanced toral element we are looking for (\ref{List}), which can be recovered from the sheet diagram of the orbit. We will make use of \cite[Tables 6-10]{InducedExceptionalOrbits}, which provide information on the dimension and the sheet diagram of every induced orbit. Notice that data provided \emph{loc. cit.} can be used more generally in good characteristic thanks to \cite[Theorem 1.4]{RigidSheets}.

\subsection{Kac coodinates and finite order automorphisms of simple Lie algebras} \label{InnerTorsion}

\subsubsection{Inner torsion automorphisms.}

Until further notice, we will be working with a simple Lie algebra $\mathfrak{g}_\mathbb{C}$ and the algebraic group of adjoint type $G_\mathbb{C}= Aut(\mathfrak{g}_\mathbb{C})^\circ$. This section follows closely \cite{ReedTors}.

Let us fix a Borel subgroup $B_\mathbb{C} \subseteq G_\mathbb{C}$ and a maximal torus $T_\mathbb{C} \subseteq B_\mathbb{C}$; this amounts to the choice of a system of positive roots $\Phi^+$ for the root system $\Phi$ of $G_\mathbb{C}$, and hence a subset of simple roots $\Delta = \{ \alpha_1, \ldots , \alpha_l\}$. $\Delta$ is a basis of the weight lattice $X = X^\ast (T_\mathbb{C})= \{ \mathrm{algebraic \; homomorphisms} \; T_\mathbb{C} \rightarrow \mathbb{C}^\times \}$, that admits a natural pairing $\langle \,,\rangle : X \times Y \rightarrow \mathbb{Z}$, where $Y= X_\ast (T_\mathbb{C})= \{ \mathrm{algebraic \; homomorphisms} \; \mathbb{C}^\times \rightarrow T_\mathbb{C} \}$ is the lattice of cocharacters of $T_\mathbb{C}$. Let $\{ \check{\omega}_{1}, \ldots, \check{\omega}_{l}\}$ be the $\mathbb{Z}$-basis of $Y$ consisting of fundamental coweights dual to $\Delta$, explicitly $\langle \alpha_i, \check{\omega}_j \rangle = \delta_{ij}$ for all $i,j$.

\subsubsection{}
The full automorphism group $Aut(\mathfrak{g}_\mathbb{C})$ consists of laterals $\sigma G_\mathbb{C}$, where $\sigma$ is a graph automorphism of the Dynkin diagram. An automorphism of $\mathfrak{g}_\mathbb{C}$ belonging to $G_\mathbb{C}$ is called \textit{inner}. An inner torsion automorphism is semisimple, so we can assume it belongs to $T_\mathbb{C}$. If it is of order $m \in \mathbb{Z}_{>0}$, it is uniquely determined by its action on root subspaces $(\mathfrak{g}_\mathbb{C})_{\alpha_i }$, that is, multiplication by an $m$-th root of unity. Noted $V = \mathbb{R} \otimes Y$, torsion elements are the image of $\mathbb{Q} \otimes Y$ under the homomorphism
$$
\mathrm{exp}: V \longrightarrow T_\mathbb{C}
$$
defined by
$$\alpha(\mathrm{exp}(x)) = e^{2 \pi i \langle \alpha, x \rangle}$$
for all $\alpha \in \Phi$, where the pairing is extended to $V$ by $\mathbb{R}$-bilinearity. The homomorphism $\mathrm{exp}$ yields an isomorphism between $V/Y$ and the subtorus $\mathrm{Im} (\mathrm{exp}) \subseteq T_\mathbb{C}$.

The Weyl group $W$ acts on $V$ as a group generated by reflections about simple roots; every semisimple element of $G_\mathbb{C}$ is conjugate to an element of $T_\mathbb{C}$, and two elements of $T_\mathbb{C}$ are $G_\mathbb{C}$-conjugate if and only if they are $W$-conjugate. Therefore, $G_\mathbb{C}$-conjugacy classes of semisimple elements of $G_\mathbb{C}$ are in bijection with $W$-conjugacy classes on $T_\mathbb{C}$, so that two elements $x, y \in \mathbb{Q}\otimes Y$ give conjugate torsion elements $\mathrm{exp}(x)$ and $\mathrm{exp}(y)$ if and only if $x, y$ are conjugate in $V$ under the \textit{extended affine Weyl group}
\begin{displaymath}
\widetilde{W} = W \ltimes Y,
\end{displaymath}
where the action of $Y$ on $V$ is given by translations.

\subsubsection{}
The hyperplane arrangement on $V$ consisting of hyperplanes
$$
L_{\alpha, n} = \{  x \in V \; | \; \langle \alpha, x \rangle = n\} \qquad  \alpha \in \Phi, n \in \mathbb{Z}
$$
 has been extensively studied; we refer in particular to \cite[VI.2]{ErBourba}. The group of isometries generated by reflections about the hyperplanes $L_{\alpha, n}$ naturally arises in this setting, and it is isomorphic to the \textit{affine Weyl group} $\widetilde{W}^\circ = W \ltimes \mathbb{Z} \check{\Phi}$, where $\mathbb{Z} \check{\Phi} \subseteq Y$ is the lattice of coroots of $T_\mathbb{C}$. The space $V$ can be partitioned into \textit{alcoves}; the closure of any alcove is a fundamental domain for the action of $\widetilde{W}^\circ$, which is a Coxeter group generated by reflections about the $l+1$ \textit{walls} of any alcove.

Let $\widetilde{\alpha}_0 = \sum_{i=1}^{l} b_i \alpha_i$ be the highest root in $\Phi^+$, $\alpha_0 = 1-\widetilde{\alpha}_0$ (as an affine linear function on $V$), and $b_0=1$, so that $\sum_{i=0}^{l} b_i \alpha_i =1$.
The simplex:
\begin{displaymath}
C = \{ x \in V \; | \; \langle \alpha_i,x \rangle > 0 \quad \forall \, 0 \leq i \leq l \; \}
\end{displaymath}
is the alcove determined by $\Delta$.

For sake of notation, let $\check{\omega}_0=0$. The closure of $C$ can be written adding an extra coordinate $x_0$ as
\begin{displaymath}
\overline{C} = \left\{ \, \sum_{i=0}^{l} x_i \check{\omega}_i \; \Big| \; x_i \geq 0 \, \mbox{and} \; \sum_{i=0}^{l} b_i x_i =1 \, \right\}.
\end{displaymath}

\noindent Notice that $\overline{C}$ is the convex hull of its vertices defined as $v_i=b_i^{-1} \check{\omega}_i$ for $0 \leq i \leq l$.

\vspace{2mm}
The extended affine Weyl group $\widetilde{W}$ acts transitively on alcoves since the affine Weyl group $\widetilde{W}^\circ$ does. Yet, unlike this latter, $\widetilde{W}$ need not act simply transitivity on alcoves, so there exists a possibly nontrivial stabilizer $\Omega= \{  \rho \in \widetilde{W} \, | \, \rho \cdot C = C \}$, which gives the extended affine Weyl group a semidirect product structure as
\begin{displaymath}
\widetilde{W} = \Omega \ltimes \widetilde{W}^\circ.
\end{displaymath}

\noindent Hence for $x,y \in \overline{C}$ the elements $\mathrm{exp}(x)$ and $\mathrm{exp}(y)$ are \mbox{$G_\mathbb{C}$-conjugate} if and only if $x, y$ are $\Omega$-conjugate.

\subsubsection{Action of $\Omega$.}

 Looking at the definition of the extended and the unextended affine Weyl group one sees that $\Omega \simeq Y/\mathbb{Z} \check{\Phi}$, the fundamental group of $G_\mathbb{C}$. In our cases of interest $\Omega$ is the trivial group for types $E_8, F_4$ and $G_2$, while for types $E_6$ and $E_7$ it is isomorphic to a cyclic group of order $3$ and $2$ respectively.

 The action of $\Omega$ on the vertices of the alcove $C$ can be visualized via symmetries of the extended Dynkin diagram, as described in \cite[VI.2.3]{ErBourba}. Elements of $\Omega$ are in bijection with \textit{minuscule} coweights, namely those coweights $\check{\omega}_i$ for which $b_i=1$.

 In type $E_6$ the minuscule coweights are $\check{\omega}_1$ and $\check{\omega}_6$ in Bourbaki's notation, and the group $\Omega$ is isomorphic to the group of rotations of the extended Dynkin diagram.

 In type $E_7$ there is only one minuscule coweight, $\check{\omega}_7$, and the generator of $\Omega$ is the unique symmetry of the extended Dynkin diagram.

\subsubsection{Kac coordinates.}

Kac coordinates parameterize elements of $\overline{C}$ that correspond to finite order automorphisms. Let $x \in \overline{C}$ and suppose that $\mathrm{exp}(x)$ is a torsion element of order $m$, that is, $\mathrm{exp}(mx)=1$. Then $mx \in Y$, so there exist nonnegative integers $a_1, \ldots, a_l$ (with $\mathrm{gcd}\{ m, a_1, \ldots, a_l\}=1$) such that
\begin{displaymath}
x= \frac{1}{m} \sum_{i=1}^{l}a_i \check{\omega}_i.
\end{displaymath}
Since $x \in \overline{C}$,
\begin{displaymath}
0 \leq \langle \alpha_0,x \rangle = 1 - \frac{1}{m} \sum_{i=1}^{l} b_i a_i.
\end{displaymath}
If we define the nonnegative integers $b_0=1$ and $a_0 = m -\sum_{i=1}^{l} b_i a_i$, the $(l+1)$-tuple $(a_0, \ldots, a_l)$ satisfies
\begin{equation} \label{Kaccoo}
\sum_{i=0}^{l} b_i a_i = m.
\end{equation}
 We call the $l+1$-tuple of the $a_i$'s the \textit{Kac coordinates} of $s \in G_\mathbb{C}$. Fix a primitive $m$-th root of unity $\zeta$; Kac coordinates uniquely determine a torsion element $s$ since they give its action on every root subspace $(\mathfrak{g}_\mathbb{C})_{\alpha}$ for any root $\alpha = \sum_{i=1}^{l} c_i \alpha_i \in \Phi$, that is, multiplication by $\zeta^{\sum_{i=1}^{l} c_i a_i}$.

 Two elements $s = \mathrm{exp}(x)$ and $s^\prime=\mathrm{exp}(x^\prime)$ are $G_\mathbb{C}$-conjugate if and only if their Kac coordinates $(a_0, \ldots, a_l)$ and $(a^\prime_0, \ldots, a^\prime_l)$ can be obtained from one another under the action of $\Omega$ as permutation of the indices $\{0, \ldots , l \}$
 corresponding to its action on the vertices of the alcove $C$.

\subsection{Kac coordinates for toral elements}  \label{ToralKac} \hfill\break

From now on, $\mathfrak{g}$ and $G$ will be considered again as defined over a field $k$ of positive characteristic $p$.

Unfortunately the theory depicted so far does not always apply to this more general setting as in the characteristic $p>0$ case Kac coordinates can only be used to classify torsion automorphisms of order $n$ with $\mathrm{gcd}(p,n)=1$. Indeed, an automorphism of $\mathfrak{g}$ of order divisible by $p$ does not endow the Lie algebra with a $\mathbb{Z}/n \mathbb{Z}$-grading.

Serre (\cite{Serre-coo}) makes up for this issue by resorting to affine group schemes; for generalities about these we refer to \cite{IntroGroupSchemes}.

\subsubsection{}
Let us denote by $\bm{\mu}_n, \bm{G}_m$ and $\mathbb{Z}/n\mathbb{Z}$ the $k$-group schemes of $n$-th roots of unity, multiplicative group and integers modulo $n$ respectively. $\mathbb{Z}/n\mathbb{Z}$ is the Cartier dual of $\bm{\mu}_n$, in other words it is isomorphic to the character group $\mathrm{Hom}_k(\bm{\mu}_n, \bm{G}_m)$.

As for inner torsion automorphisms, Serre remarks that already in the characteristic $0$ case they are embedding of group schemes $\mathrm{Hom}_k(\bm{\mu}_n, G)$. Due to Cartier duality, they correspond to $\mathbb{Z}/n\mathbb{Z}$-gradings on the Lie algebra $\mathfrak{g}$, that can be parameterized by means of Kac coordinates.

This makes it possible to generalize the theory to the case of characteristic $p>0$, with $p|n$. In fact, Kac coordinates parameterize classes of embeddings of group schemes, or equivalently $\mathbb{Z}/n\mathbb{Z}$-gradings on $\mathfrak{g}$, and these correspond to elements $x \in \mathbb{Q} \otimes Y$ (here we have to replace the map $\mathrm{exp}$ with what Serre calls a \textit{parameterization of roots of unity}, that is, a homomorphism $e : \mathbb{Q} \rightarrow k$ of kernel $\mathbb{Z}$).

 The difference is that now an embedding of group schemes need not give an automorphism of order $n$, but it can be interpreted as a toral element $h \in \mathfrak{g}$ when $n=p$. Using arguments identical to those applied before, $G$-conjugacy classes on $\mathfrak{g}$ correspond to $\widetilde{W}$-orbits on $\mathbb{Q} \otimes Y$. A representative $h$ of a class of toral elements is uniquely defined by the Kac coordinates $(a_0, a_1, \ldots, a_l)$ of a point $x \in \mathbb{Q} \otimes Y \cap \overline{C}$, so that $\alpha_i(h) = a_i$ for all $i= 1, \ldots, l$, and $a_0=n -\sum_{i=1}^{l} b_i a_i$.

\section{Classification of balanced toral elements} \label{Class}

The machinery from section \ref{Preli} will be now used to obtain the classification of $G$-conjugacy classes of balanced toral elements. In characteristic $p=2$ and $p=3$ all toral elements are balanced. This is obvious for $p=2$; for $p=3$ it is just as immediate by
observing that $\mathfrak{g}_{\alpha} \subseteq \mathfrak{g}(h, i)$ if and only if $\mathfrak{g}_{-\alpha} \subseteq \mathfrak{g}(h, -i)$, for $\alpha \in \Phi$ and $i \in \mathbb{F}_3$. Apart from the root system of type $E_8$ in characteristic $p=5$, this settles the classification in
bad characteristic for Lie algebras of exceptional type.

The strategy is based on case-by-case computations on Richardson orbits; \ref{toralRichorbit} follows closely Premet (\cite{ModularMorozov}); the goal is to adapt some of his considerations to our setting. We will assume that \textit{$p$ is a good prime for the root system of
$\mathfrak{g}$}. After working on this case, we will separately examine type $E_8$ in characteristic $p=5$, since it requires some slight modifications to the strategy.

\subsection{The Richardson orbit associated to a balanced toral element} \label{toralRichorbit} \hfill\break

There is a canonical way of associating a Richardson orbit to a $G$-conjugacy class of toral elements. Let $h$ be a representative of one of these orbits; up to conjugacy it belongs to the Lie algebra $\mathfrak{t}$ of a fixed maximal torus $T \subseteq G$, and the centralizer $\mathfrak{l}=\mathfrak{c}_{\mathfrak{g}}(h)$ is a Levi subalgebra of $\mathfrak{g}$.

As the $0$-orbit on $\mathfrak{l}$ is rigid, the nilpotent orbit $\mathcal{O}_0= \mathrm{Ind}_{\mathfrak{l}}^{\mathfrak{g}}(\{0\})$ obtained by induction from $0 \in \mathfrak{l}$ is the unique nilpotent orbit contained in the sheet $\mathcal{S}$ whose open decomposition class is $\mathcal{D}(\mathfrak{l},0)= \mathrm{Ad} \, G (\mathfrak{z(l)}_{reg})$. 

If in addition $h$ is balanced, there are some restrictions on the values $\dim \mathcal{O}_0$ can assume. Let us consider $e \in \mathcal{O}_0$ which lies in the same sheet as $h$. It verifies $\dim \mathfrak{g}_e = \dim G_e = \dim G_h = \dim \mathfrak{g}_h$. The following equalities hold:
\vspace{-2mm}
\begin{multline*}
 \dim \mathcal{O}_0 = \dim \mathfrak{g} - \dim G_e = \\
= \dim \mathfrak{g} - \dim \mathfrak{g}_h = \sum_{i \in \mathbb{F}_{p}^{\times}} \dim \mathfrak{g}(h,i) = (p-1) \cdot \dim \mathfrak{g}(h,1).
\end{multline*}

\noindent Hence $\dim \mathcal{O}_0  \equiv 0 \mod (p-1)$, so that a balanced toral element $h$ naturally corresponds to a Richardson orbit $\mathcal{O}_0$ of dimension divisible by $p-1$.

\subsection{Structure of the centralizer.} \label{List}

Let $h$ be a balanced toral element, without loss of generality we can assume it corresponds to a choice of Kac coordinates $(a_0, a_1, \ldots, a_l)$ as in \ref{ToralKac}. Because of the limitations on its dimension, there there are not too many options for the orbit $\mathcal{O}_0$ associated to it as in \ref{toralRichorbit}. We make use of \cite[Tables 6-10]{InducedExceptionalOrbits}; the sheet diagram describes the structure of the centralizer $\mathfrak{l}$ of $h$. We need to consider all possible ways of embedding a centralizer of the relevant type in the Dynkin diagram, since Kac coordinates for a balanced element might be relative to a standard Levi subalgebra conjugated to the centralizer we can read off from the tables. For exceptional Lie algebras two isomorphic Levi subalgebras are always conjugated, except for three cases in type $E_7$. Still, what is chiefly needed is the dimension of the orbit $\mathrm{Ind}^\mathfrak{g}_\mathfrak{l}(0)$, that depends only on the type of the centralizer $\mathfrak{l}$. Finally, simple roots belonging to a base for the standard Levi subalgebra $\mathfrak{l}$ can be chosen up to a permutation of $\Delta$ corresponding to an element of $\Omega$.

\noindent For every simple Lie algebra of exceptional type, we produce the list:

\begin{center}
\begin{tabular}{|c|c|}
\hline
$\Omega$-orbits of standard Levi subalgebras $\overline{\mathfrak{l}}$ & $\dim \mathrm{Ind}^\mathfrak{g}_\mathfrak{l}(0)$ \\
\hline
\end{tabular}
\end{center}

\subsection{Kac coordinates for $h$.}

If $h$ is a balanced element in $\mathrm{char} k = p>0$, $\dim \mathcal{O}_0$ is divisible by $p-1$ (\ref{toralRichorbit}). All the potential centralizers of $h$ are those for which $\dim \mathrm{Ind}^\mathfrak{g}_\mathfrak{l}(0)$ is divisible by $p-1$, and so they can be read off from the list as in \ref{List}. Any of these choices of centralizers tells exactly which Kac coordinates are equal to $0$, while all the others must be strictly positive, let them be $a_0, a_{i_1}, \ldots, a_{i_r} $. Notice that $a_0>0$ because the characteristic is good for the root system. We inspect all possible strictly positive coordinates that satisfy the condition $a_0= p - \sum_{j=1}^{r} b_{i_j} a_{i_j}$. Each one of the $a_i$ is the eigenvalue of $\mathrm{ad} h$ on $\mathfrak{g}_{\alpha_i}$, so by linearity one obtains the eigenvalue of $\mathrm{ad} h$ on any root space $\mathfrak{g}_\alpha$, for $\alpha \in \Phi$. Finally, we check that $\dim \mathfrak{g}(h,i)$ for $i \in \mathbb{F}_p^\times$ is independent of $i >0$.

\subsection{Restrictions on the characteristic.} \label{restrictions}

We repeat this procedure varying the characteristic $p>0$. There is always an upper bound for a prime number $p$ such that a balanced toral element in characteristic $p$ can potentially exist, given the type of the Lie algebra. Indeed, such a value depends on the root system: \textit{e.g.} $p \leq | \Phi |+1$, but this can actually be refined. Observe that $| \Phi |$ equals the dimension of the regular nilpotent orbit for the Lie algebra $\mathfrak{g}$, and assume $\frac{| \Phi |}{2} +1 < p< | \Phi |+1$. If a balanced toral element $h$ exists in characteristic $p$, all its eigenspaces must be of dimension $1$. Moreover, the orbit $\mathcal{O}_0$ corresponding to $h$ is not the regular nilpotent orbit. Thus $h$ is not a regular semisimple element, so its centralizer contains a nonzero root space $\mathfrak{g}_{\alpha}$, and without loss of generality $\alpha \in \Delta$. Otherwise stated, in the sheet diagram of $\mathcal{O}_0$ there is at least one node with label $0$. We apply a classical result of Bourbaki (\cite[VI.1.6 Corollaire 3 \`{a} la Proposition 19]{ErBourba}) to show that for at least one $i \in \mathbb{F}_p^\times$ we have $\dim \mathfrak{g}(h,i)>1$. There exist simple roots $\alpha, \beta \in \Delta$ such that $\alpha(h)=0$, $\beta(h) >0$ and $\alpha +\beta$ is a root; this is clear considering two adjacent nodes on the sheet diagram such that one is labelled $0$ and the other has label $2$. But then $\beta, \alpha+\beta \subseteq \mathfrak{g}(h,\beta(h))$, and so this subspace has dimension strictly greater than $1$.

In any case, the subregular orbit is the only one whose sheet diagram contains exactly one node labelled $0$, and its dimension is $| \Phi |-2$. Using an argument similar to the one above, if neither $| \Phi |$ nor $| \Phi |-2$ is of the form $2(q-1)$ for a prime $q$, then we can reduce even further the number of primes to check: it is not necessary to consider the range \mbox{$\frac{| \Phi |}{3}+ 1 < p < | \Phi |+1$}.

As a result, for every root system it is enough to check only the following primes:
\begin{itemize}
\item Type $G_2$, $5 \leq p \leq 7$ and $p=13$.
\item Type $F_4$, $5 \leq p \leq 23$.
\item Type $E_6$, $5 \leq p \leq 37$ and $p=73$.
\item Type $E_7$, $5 \leq p \leq 43$ and $p=127$.
\item Type $E_8$, $7 \leq p \leq 79$ and $p=241$.
\end{itemize}

Except for type $F_4$, $| \Phi |+1$ is a prime number, hence it is necessary to check if there exist balanced toral elements in characteristic $| \Phi |+1$, that is, an element $h$ whose adjoint action on the Lie algebra yields all nonzero eigenspaces of dimension $1$. This represents the trickiest case for every root system, and it turns out that only in type $G_2$ there exist balanced elements in characteristic $| \Phi |+1$.

\subsection{The algorithm} \hfill\break

The code we wrote is absolutely elementary and does not make use of any functions in the C$++$ library. It varies slightly for each distinct type as we have to input two sets of data: the root system and the sheet diagram of Richardson orbits.

\subsubsection{The root system.} In order to obtain the root system we use some basic routines in GAP (\cite{GAP}). What we need is a system of positive roots. This is easily achieved by the following sequence of commands, relative to type $G_2$ in the example:

\vspace{2mm}
\texttt{>L:= SimpleLieAlgebra( "G", 2, Rationals );;}

\verb">R:= RootSystem( L );;"

\verb">PositiveRoots( R );"

\verb">SimpleSystem( R );"

\vspace{2mm}
The second last command returns a list of positive roots $v_1, \ldots, v_{\frac{| \Phi |}{2}}$; these are column vectors. The last command provides the subset of roots $v_1, \ldots, v_n$ (where $n= \mathrm{rank} \mathfrak{g}$) that are a basis of simple roots for the root system above.

In order to simplify the computations ahead, it is convenient to base change the vectors $v_1, \ldots, v_n$ to the standard basis $(1, 0, \ldots, 0), (0,1, \ldots, 0),$ $ \ldots, (0, \ldots, 0,1)$; these correspond to the roots $\alpha_1, \ldots, \alpha_n$ (Bourbaki's numbering) respectively. This we did by considering the $n \times n$ matrix $A=(v_1 \ldots v_n)$. For type $F_4$ we used the matrix $A=(v_2 v_4 v_3 v_1)$ to match the required order. Then we computed the inverse $A^{-1}$ and our list of positive roots is the set of vectors $\{ A^{-1}v_1, A^{-1}v_2, \ldots, A^{-1}v_{\frac{| \Phi |}{2}} \} $.

\subsubsection{The pseudocode.} \label{algorithm}

Call $n= \mathrm{rank} \, \mathfrak{g}$ and $q=$ number of Richardson orbits.

First of all, we initialize the integers $s_0=1, s_1, \ldots, s_n$ representing the coefficients of the highest roots.

The positive roots are given in C in the form a matrix $M$. This is a $\frac{| \Phi |}{2} \times n$ matrix whose $i^{\mathrm{th}}$ row is the vector $A^{-1}v_i$. For instance, when $\mathfrak{g}$ is of type $G_2$ the matrix \texttt{M} is:

\vspace{2mm}
\texttt{M[0][0]=1; M[0][1]=0;}  $\qquad \alpha_1$

\texttt{M[1][0]=0; M[1][1]=1;} $\qquad \alpha_2$

\texttt{M[2][0]=1; M[2][1]=1;} $\qquad \alpha_1+\alpha_2$

\texttt{M[3][0]=2; M[3][1]=1;} $\qquad 2\alpha_1+\alpha_2$

\texttt{M[4][0]=3; M[4][1]=1;} $\qquad 3\alpha_1+\alpha_2$

\texttt{M[5][0]=3; M[5][1]=2;} $\qquad 3\alpha_1+2\alpha_2$.

\vspace{3mm}

The list as in \ref{List} is inputted in the form of a $q \times (n+1)$ matrix $O$. Data for these orbits are sourced from the Tables in \cite{InducedExceptionalOrbits}. As for the first $n$ entries in each row, simple roots generating the centralizer Levi subalgebra correspond to $0$-entries in the matrix, whereas all the others are given the value $2$ (this could have been any nonzero value, we chose $2$ in analogy with the sheet diagram). For example, in type $F_4$ a centralizer generated by roots $\alpha_1$ and $\alpha_2$ (in Bourbaki's notation) is associated to the $i$-th row of the matrix in this way: $O_{i,1}=0 O_{i,2}=0 O_{i,3}=2 O_{i,4}=2$. 

On the other hand, the last column of the matrix $O$ contains the dimension of the orbit considered. Carrying on with the same example, we have $O_{i,5}=42$, as one can read off from \cite[Table $10$]{InducedExceptionalOrbits} (indeed, the orbit whose sheet diagram is $0022$ has dimension $42$).

\paragraph{} The characteristic $p$ varies according to our discussion in \ref{restrictions}. We initialize the range of primes in the form of an array $P=(P_1, \ldots, P_r)$. Let $p=P_i$ be fixed. The program finds all the rows of the matrix $O$ whose entry in the last column is divisible by $p-1$. The other entries in the same row describe the structure of the centralizer. Kac coordinates are contained in an array $a$. In case $O_{i,j}=0$ we set $a_j=0$, while the remaining entries of $a$ (including $a_0$) must be strictly positive.

The code examines all the tuples with $a=p-\sum_{i=1}^{n}s_i a_i$  over all the suitable $a_i$. For each of them an array $v$ of dimension $| \Phi |$ has the task of counting the dimension of the different eigenspaces $\mathfrak{g}(h,i)$. This is easy to compute: if the root $\alpha$ corresponds to the $j$-th row of $M$, then the root space $\mathfrak{g}_\alpha$ has eigenvalue $j=\sum_{k=1}^{\mathrm{rank} \, \mathfrak{g}}M_{j,k}a_k$ , and the component $v_j$ is increased.

Finally, the program returns tuples of Kac coordinates for which $v_i=v_1$ for all $i \in \mathbb{F}^\times_p$.

\vspace{2mm}

\noindent Here is the pseudocode for the algorithm described.

\vspace{2mm}

\begin{algorithm}[H]
\caption{Kac coordinates of balanced toral elements}
\DontPrintSemicolon
\BlankLine
Initialize with $n, q,r, s_0, \ldots, s_n, M, P, O$\;
Define $a=0$, $v=0$\;
Define $p,i,j,m$ dummy indices\;
\For{$p \leftarrow P_1$ to $P_r$}{
 \For{$m \leftarrow 1$ to $q$}{
  \If{$O_{n+1,m} \equiv 0 \; \mathrm{mod} \; p-1$}{
  Set $v=0$ array\;
  Set $a=0$ array\;
  
  \For{$i \leftarrow 1$ to $n$}{
  \If{$O_{i,m} \neq 0$}{
  Set $a_i=1$\;
  }
  }
  \While{$s_1 a_1+ \ldots s_n a_n <p$}{
  Set $a_0=p-(s_1 a_1+ \ldots s_n a_n)$\;
  \For{$i \leftarrow 1$ to $| \Phi |$}{
  Set $j=M_{i,1}a_1+ \ldots+ M_{i,n}a_n$\;
  Increase $v_j$ by $1$\;
  Increase $v_{p-j}$ by $1$\;
  }
  \If{$v_1=v_2= \ldots =v_{p-1}$}{
  Return characteristic $p$\;
  Return Kac coordinates $a_0, a_1, \ldots, a_n$\;
  }
  }
  }
 
 } 
} 
\end{algorithm}

\subsection{The bad characteristic case} \hfill \break

Let us now focus on type $E_8$ in characteristic $5$. Unlike the good characteristic case, here the centralizer of a semisimple element need not be a Levi subalgebra, more generally it is a \textit{pseudo-Levi subalgebra}. Kac coordinates can still be used to parameterize $G$-classes of toral elements, yet we have to allow $a_0=0$. As the extended Dynkin diagram in type $E_8$ does not admit any symmetry, our list as in \ref{List} simply contains all the potential centralizers of a toral element. For $p=5$, we check only the possible centralizers, not the dimension of the orbit (nilpotent orbits may be different from good characteristic, and the description of sheets in \ref{rigiddec} does not apply). However, we stress that this does not hinder the efficiency of the algorithm, mainly because there are very few centralizers that can work for a toral element as in \ref{ToralKac}, as Kac coordinates are nonnegative integers satisfying $a_0=5-2a_1-3a_2-4a_3-6a_4-5a_5-4a_6-3a_7-2a_8$.

The Tables in the next section contain all conjugacy classes of toral elements in characteristic $2$ and $3$ as well; elements in these classes are automatically balanced. We simply list Kac coordinates satisfying (\ref{Kaccoo}) (for $m=2$ and $3$) up to the action of $\Omega$, since they are not too many.

\section{The Tables} \label{Tables}

We implemented the algorithm discussed in \ref{algorithm} using the programming language C. The Tables hereafter represent all the $G$-conjugacy classes of balanced toral elements and are sorted by type of the root system and characteristic (noted $p$). Every $G$-conjugacy class is described by a choice of Kac coordinates. Let $h$ be an element in a fixed class, and note $\Phi_0$ the root system of the centralizer of $h$. The second last column contains the type of $\Phi_0$; notice that in a few cases $h$ is regular semisimple. Finally, the value in last column is the dimension $d=\dim \mathfrak{g}(h,i)$ of each $h$-eigenspace for $i \neq 0$.

\section*{Type $G_2$}

\begin{longtable}{ |c|c|c|c|c|c| }
 \hline
 \multirow{2}{1em}{  $p$} & \multicolumn{3}{|c|}{Kac coord.} & \multirow{2}{3em}{ Type of $\Phi_0$} & \multirow{2}{1em}{  $d$}\\
  & $a_0$ & $a_1$ & $a_2$ & &\\
  \hline
  $2$ & $0$ & $1$ & $0$ & $\widetilde{A}_1A_1$ & $8$ \\
 $3$ & $0$ & $0$ & $1$ & $A_2$ & $3$  \\
 $3$ & $1$ & $1$ & $0$ & $\widetilde{A}_1$ & $5$  \\
 $7$ & $2$ & $1$ & $1$ & regular & $2$ \\
 $13$ & $6$ & $1$ & $2$ & regular & $1$  \\
 $13$ & $1$ & $2$ & $3$ & regular & $1$ \\
\hline
\caption{}
\label{G2}
\end{longtable}

\section*{Type $F_4$}

\begin{longtable}{ |c|c|c|c|c|c|c|c| }
 \hline
 \multirow{2}{1em}{  $p$} & \multicolumn{5}{|c|}{Kac coordinates} & \multirow{2}{3em}{ Type of $\Phi_0$} & \multirow{2}{1em}{  $d$}\\
  & $a_0$ & $a_1$ & $a_2$ & $a_3$ & $a_4$ && \\
  \hline
  $2$ & $0$ & $1$ & $0$ & $0$ & $0$ & $A_1C_3$ & $28$ \\
  $2$ & $0$ & $0$ & $0$ & $0$ & $1$ & $B_4$ & $16$ \\
  $3$ & $1$ & $1$ & $0$ & $0$ & $0$ & $C_3$ & $15$ \\
  $3$ & $0$ & $0$ & $1$ & $0$ & $0$ & $\widetilde{A}_2A_2$ & $18$ \\
  $3$ & $1$ & $0$ & $0$ & $0$ & $1$ & $B_3$ & $15$ \\
 $5$ & $1$ & $1$ & $0$ & $0$ & $1$ & $B_2$ & $10$ \\
 $7$ & $1$ & $0$ & $0$ & $1$ & $1$ & $A_2$ & $7$ \\
 $7$ & $2$ & $1$ & $1$ & $0$ & $0$ & $\widetilde{A}_2$ & $7$ \\
 $13$ & $2$ & $1$ & $1$ & $1$ & $1$ & regular & $4$ \\
 $17$ & $4$ & $2$ & $1$ & $1$ & $1$ & regular & $3$ \\
 $17$ & $1$ & $1$ & $2$ & $1$ & $2$ & regular & $3$ \\
\hline
\caption{}
\label{F4}
\end{longtable}

\section*{Type $E_6$}

\begin{longtable}{ |c|c|c|c|c|c|c|c|c|c| }
 \hline
 \multirow{2}{1em}{  $p$} & \multicolumn{7}{|c|}{Kac coordinates} & \multirow{2}{3em}{ Type of $\Phi_0$} & \multirow{2}{1em}{  $d$} \\
  & $a_0$ & $a_1$ & $a_2$ & $a_3$ & $a_4$ & $a_5$ & $a_6$ && \\
  \hline
 $2$ & $1$ & $1$ & $0$ & $0$ & $0$ & $0$ & $0$ & $D_5$ & $32$ \\
 $2$ & $0$ & $0$ & $0$ & $1$ & $0$ & $0$ & $0$ & $A_1A_5$ & $40$ \\
 $3$ & $2$ & $1$ & $0$ & $0$ & $0$ & $0$ & $0$ & $D_5$ & $16$ \\
 $3$ & $1$ & $2$ & $0$ & $0$ & $0$ & $0$ & $0$ & $D_5$ & $16$ \\
 $3$ & $1$ & $1$ & $0$ & $0$ & $0$ & $0$ & $1$ & $D_4$ & $24$ \\
 $3$ & $1$ & $0$ & $1$ & $0$ & $0$ & $0$ & $0$ & $A_5$ & $21$ \\
 $3$ & $1$ & $0$ & $0$ & $1$ & $0$ & $0$ & $0$ & $A_1A_4$ & $25$ \\
 $3$ & $1$ & $0$ & $0$ & $0$ & $0$ & $1$ & $0$ & $A_1A_4$ & $25$ \\
 $3$ & $0$ & $0$ & $0$ & $0$ & $1$ & $0$ & $0$ & $A_2^3$ & $27$ \\
 $5$ & $1$ & $1$ & $1$ & $0$ & $0$ & $0$ & $1$ & $A_3$ & $15$ \\
$7$ & $2$ & $4$ & $0$ & $0$ & $0$ & $0$ & $1$ & $D_4$ & $8$ \\
$7$ & $1$ & $4$ & $0$ & $0$ & $0$ & $0$ & $2$ & $D_4$ & $8$ \\
$7$ & $2$ & $0$ & $1$ & $0$ & $1$ & $0$ & $0$ & $A_2^2$ & $10$ \\
$7$ & $1$ & $1$ & $1$ & $0$ & $0$ & $1$ & $1$ & $A_2$ & $11$ \\
$11$ & $2$ & $1$ & $1$ & $0$ & $1$ & $1$ & $1$ & $A_1$ & $7$ \\
$11$ & $1$ & $1$ & $1$ & $0$ & $1$ & $1$ & $2$ & $A_1$ & $7$ \\
$13$ & $2$ & $1$ & $1$ & $1$ & $1$ & $1$ & $1$ & regular & $6$ \\
$19$ & $4$ & $1$ & $2$ & $2$ & $1$ & $1$ & $1$ & regular & $4$ \\
$19$ & $1$ & $4$ & $2$ & $2$ & $1$ & $1$ & $1$ & regular & $4$ \\
$37$ & $9$ & $5$ & $4$ & $3$ & $2$ & $1$ & $1$ & regular & $2$ \\
$37$ & $5$ & $9$ & $3$ & $4$ & $2$ & $1$ & $1$ & regular & $2$ \\
$37$ & $8$ & $9$ & $4$ & $1$ & $1$ & $2$ & $3$ & regular & $2$ \\
$37$ & $9$ & $8$ & $1$ & $4$ & $1$ & $2$ & $3$ & regular & $2$ \\
\hline
\caption{}
\label{E6}
\end{longtable}

\section*{Type $E_7$}

\begin{longtable}{ |c|c|c|c|c|c|c|c|c|c|c| }
 \hline
 \multirow{2}{1em}{  $p$} & \multicolumn{8}{|c|}{Kac coordinates} & \multirow{2}{3em}{ Type of $\Phi_0$} & \multirow{2}{1em}{  $d$} \\
  & $a_0$ & $a_1$ & $a_2$ & $a_3$ & $a_4$ & $a_5$ & $a_6$ & $a_7$ && \\
  \hline
 $2$ & $1$ & $0$ & $0$ & $0$ & $0$ & $0$ & $0$ & $1$ & $E_6$ & $54$ \\
 $2$ & $0$ & $1$ & $0$ & $0$ & $0$ & $0$ & $0$ & $0$ & $A_1D_6$ & $64$ \\
 $2$ & $0$ & $0$ & $1$ & $0$ & $0$ & $0$ & $0$ & $0$ & $A_7$ & $70$ \\
 $3$ & $2$ & $0$ & $0$ & $0$ & $0$ & $0$ & $0$ & $1$ & $E_6$ & $27$ \\
 $3$ & $1$ & $0$ & $0$ & $0$ & $0$ & $0$ & $1$ & $0$ & $A_1D_5$ & $42$ \\
 $3$ & $1$ & $0$ & $1$ & $0$ & $0$ & $0$ & $0$ & $0$ & $A_6$ & $42$ \\
 $3$ & $1$ & $1$ & $0$ & $0$ & $0$ & $0$ & $0$ & $0$ & $D_6$ & $33$ \\
 $3$ & $0$ & $0$ & $0$ & $1$ & $0$ & $0$ & $0$ & $0$ & $A_2A_5$ & $45$ \\
$5$ & $1$ & $1$ & $0$ & $0$ & $0$ & $0$ & $1$ & $0$ & $D_4 A_1$ & $25$ \\
$7$ & $2$ & $1$ & $0$ & $1$ & $0$ & $0$ & $0$ & $0$ & $A_5$ & $16$ \\
$7$ & $2$ & $1$ & $0$ & $0$ & $0$ & $0$ & $1$ & $1$ & $D_4$ & $17$ \\
$7$ & $1$ & $0$ & $0$ & $0$ & $1$ & $0$ & $1$ & $0$ & $A_1^3 A_2$ & $19$ \\
$11$ & $1$ & $1$ & $1$ & $0$ & $0$ & $1$ & $1$ & $1$ & $A_2$ & $12$ \\
$13$ & $2$ & $1$ & $0$ & $1$ & $1$ & $0$ & $1$ & $0$ & $A_1^3$ & $10$ \\
$19$ & $2$ & $1$ & $1$ & $1$ & $1$ & $1$ & $1$ & $1$ & regular & $7$ \\
\hline
\caption{}
\label{E7}
\end{longtable}

\section*{Type $E_8$}

\begin{longtable}{ |c|c|c|c|c|c|c|c|c|c|c|c| }
 \hline
 \multirow{2}{1em}{  $p$} & \multicolumn{9}{|c|}{Kac coordinates} & \multirow{2}{3em}{ Type of $\Phi_0$} & \multirow{2}{1em}{  $d$}\\
  & $a_0$ & $a_1$ & $a_2$ & $a_3$ & $a_4$ & $a_5$ & $a_6$ & $a_7$ & $a_8$ && \\
  \hline
  $2$ & $0$ & $1$ & $0$ & $0$ & $0$ & $0$ & $0$ & $0$ & $0$ & $D_8$ & $128$ \\
  $2$ & $0$ & $0$ & $0$ & $0$ & $0$ & $0$ & $0$ & $0$ & $1$ & $A_1E_7$ & $112$ \\
  $3$ & $1$ & $1$ & $0$ & $0$ & $0$ & $0$ & $0$ & $0$ & $0$ & $D_7$ & $78$ \\
  $3$ & $0$ & $0$ & $1$ & $0$ & $0$ & $0$ & $0$ & $0$ & $0$ & $A_8$ & $84$ \\
  $3$ & $0$ & $0$ & $0$ & $0$ & $0$ & $0$ & $0$ & $1$ & $0$ & $A_2E_6$ & $81$ \\
  $3$ & $1$ & $0$ & $0$ & $0$ & $0$ & $0$ & $0$ & $0$ & $1$ & $E_7$ & $57$ \\
 $5$ & $1$ & $1$ & $0$ & $0$ & $0$ & $0$ & $0$ & $0$ & $1$ & $D_6$ & $45$ \\
 $5$ & $1$ & $0$ & $0$ & $1$ & $0$ & $0$ & $0$ & $0$ & $0$ & $A_1A_6$ & $49$ \\
 $5$ & $0$ & $0$ & $1$ & $0$ & $0$ & $0$ & $0$ & $0$ & $1$ & $A_1A_6$ & $49$ \\
 $5$ & $0$ & $0$ & $0$ & $0$ & $0$ & $1$ & $0$ & $0$ & $0$ & $A_4^2$ & $50$ \\
$7$ & $2$ & $0$ & $0$ & $0$ & $0$ & $0$ & $0$ & $1$ & $1$ & $E_6$ & $28$ \\
$7$ & $1$ & $1$ & $0$ & $0$ & $0$ & $0$ & $1$ & $0$ & $0$ & $D_4A_2$ & $35$ \\
$11$ & $1$ & $1$ & $1$ & $0$ & $0$ & $0$ & $0$ & $1$ & $1$ & $A_4$ & $22$ \\
$13$ & $2$ & $1$ & $0$ & $0$ & $0$ & $0$ & $1$ & $1$ & $1$ & $D_4$ & $18$ \\
$13$ & $1$ & $1$ & $1$ & $0$ & $0$ & $1$ & $0$ & $0$ & $1$ & $A_2^2$ & $19$ \\
$19$ & $1$ & $1$ & $1$ & $1$ & $0$ & $0$ & $1$ & $1$ & $1$ & $A_2$ & $13$ \\
$31$ & $2$ & $1$ & $1$ & $1$ & $1$ & $1$ & $1$ & $1$ & $1$ & regular & $8$ \\
$41$ & $5$ & $3$ & $2$ & $1$ & $1$ & $1$ & $1$ & $1$ & $1$ & regular & $6$ \\
$41$ & $1$ & $2$ & $1$ & $1$ & $1$ & $1$ & $1$ & $2$ & $4$ & regular & $6$ \\
$61$ & $7$ & $5$ & $1$ & $3$ & $1$ & $1$ & $1$ & $2$ & $4$ & regular & $4$ \\
$61$ & $3$ & $2$ & $2$ & $1$ & $1$ & $1$ & $4$ & $1$ & $7$ & regular & $4$ \\
\hline
\caption{}
\label{E8}
\end{longtable}

\section{Conjugacy of balanced toral elements up to scalar multiples} \label{ScalarMultiples}

Let us stick to the assumption that $\mathrm{char} \, k$ is a good prime for the root system of $\mathfrak{g}$. The invariant subalgebra $k[\mathfrak{g}]^G$ is a polynomial ring generated by $l=\mathrm{rank} \, G$ algebraically independent homogeneous polynomials $\{ f_1, \ldots, f_l \}$, and the zero locus of the ideal generated by the $f_i$'s is the nullcone $\mathcal{N}(\mathfrak{g})$ (see, for example, \cite{JantzenNilp}).

Even though generators are not uniquely determined, the set $\{ d_1, \ldots, d_l \}$ of their degrees is (in characteristic $0$ this is a result of Chevalley, for good characteristic $p>0$ see \cite{InvariantsSymetriques}). Remarkably, the degrees in good characteristic are the same as in characteristic $0$ (\cite{InvariantsSymetriques}); they can be found, for example, in \cite{Humph-Reflections}. There is always an invariant of degree $2$, due to the existence of a $G$-invariant symmetric bilinear form $\kappa$ on $\mathfrak{g}$.

\subsection{Scalar multiples of balanced toral elements.} \label{InvMultip} \hfill \break

Throughout this section we will write $h \sim h^\prime$ to indicate that two elements $h$ and $h^\prime$ are in the same $G$-orbit. Let $h$ be the usual representative of a class of balanced toral elements as in section \ref{Tables}; for any $r \in \mathbb{F}_p^\times$ the element $rh$ is still balanced. A natural question to ask is whether $rh$ is in the same conjugacy class of $h$. Once fixed the characteristic $p>0$, if $h$ represents the unique conjugacy class with a certain type of centralizer, of course $rh \sim h$ for any $r \in \mathbb{F}_p^\times$. For any fixed $p$ there are at most two conjugacy classes with isomorphic centralizer, except for type $E_6$ in characteristic $37$, where there are four classes. The cases with exactly two conjugacy classes with the same type of centralizer are:
\begin{itemize}[align=left]
\item[$(i)$] type $G_2$, characteristic $13$;
\item[$(ii)$] type $F_4$, characteristic $17$;
\item[$(iii)$] type $E_6$, characteristic $3$, centralizer of type $D_5$;
\item[$(iv)$] type $E_6$, characteristic $3$, centralizer of type $A_1A_4$;
\item[$(v)$] type $E_6$, characteristic $7$;
\item[$(vi)$] type $E_6$, characteristic $11$;
\item[$(vii)$] type $E_6$, characteristic $19$;
\item[$(viii)$] type $E_8$, characteristic $5$;
\item[$(ix)$] type $E_8$, characteristic $41$;
\item[$(x)$] type $E_8$, characteristic $61$.
\end{itemize}

Since $h$ is semisimple, at least one of the invariant polynomials does not vanish on $h$; assume for instance that $f_i$ has such property. Notice that the invariant of degree $2$ always annihilates a balanced toral element $h$. This is because, up to replacing the invariant form $\kappa$ with a scalar multiple, one can write:
\begin{displaymath}
\kappa(h,h) = \mathrm{Tr} (\mathrm{ad} h)^2 = \sum_{i \in \mathbb{F}_p^\times} \dim \mathfrak{g}(h,i) i^2 = \dim \mathfrak{g}(h,1) \sum_{i \in \mathbb{F}_p^\times} i^2 \equiv 0 \; \mathrm{mod} \, p.
\end{displaymath}

\subsubsection{}
The elements of $\mathbb{F}_p^\times$ are all the $r \in k$ satisfying $r^{p-1}=1$. Let us assume that $rh \sim h$ for all $r \in \mathbb{F}_p^\times$, and suppose $f_i(h) \neq 0$. By invariance $f_i(h)=f_i(rh)=r^{d_i} f_i(h)$, and so $r^{d_i} =1$ for all $r \in \mathbb{F}_p^\times$. The number of solutions of the polynomial $x^{d_i} =1$ over the field $\mathbb{F}_p$ is at most $d_i$. Therefore, if $p-1 > \max_{i=1, \ldots, l} d_i$, there exists a multiple $rh$ of $h$ which is not conjugate to $h$.
When there are two conjugacy classes with representatives $h$ and $h^\prime$, this implies that at least one multiple of $h$ is conjugate to $h^\prime$ and viceversa.

Since maxima for the degrees of the $f_i$'s in type $G_2, F_4, E_6$ and $E_8$ are respectively $6, 12, 12$ and $30$, this argument suffices to settle the problem in all cases with two conjugacy classes except for $(iii), (iv),(v),(vi)$ and $(viii)$.

\paragraph{}
A closer look shows that case $(vi)$ can actually be tackled in the same way. Indeed, consider the root system of type $E_6$ in characteristic $11$. There are two classes of balanced toral elements with representatives $h$ and $h^\prime$, whose centralizer is in both cases of type $A_1$. Assume $rh$ is conjugate to $h$ for all $r \in \mathbb{F}_{11}^\times$. The degrees of invariants in type $E_6$ are $2, 5, 6, 8, 9, 12$. The argument above shows that $f_1(h)= f_2(h)=f_3(h)=f_4(h)=f_5(h)=0$. But $f_6(h) \neq 0$ implies $r^{12}=1$ for all $r \in \mathbb{F}_{11}^\times$, and so $r=\pm 1$.

\subsubsection{Type $E_6$ in characteristic $37$.}
Only in this case are there four conjugacy classes with isomorphic centralizers, let their representatives be $h_1, h_2, h_3, h_4$. Since the maximum degree for a generator of the invariant subalgebra is $12 < 36$, none of the $h_i$'s has the property that all its multiples are conjugate to $h_i$ itself.

Consider the set $X_i = \{ r \in \mathbb{F}_{37}^\times | rh_1 \sim h_i \}$. Clearly $X_1 \neq \emptyset$ and $\mathbb{F}_{37}^\times= \coprod_{i=1}^4 X_i$. Assume $X_j \neq \emptyset$ for $j \neq 1$. For $s_j \in X_j$ we have the inclusion $s_j X_1 \subseteq X_j$. This means that there exists $g \in G$ such that $s_j h_1 = (\mathrm{Ad} g) h_j$, and so $(\mathrm{Ad} g) s_j^{-1} h_j = h_1$. In particular, for any $t \in X_j$, $s_j^{-1} t h_1 \sim s_j^{-1} h_j \sim h_1$. Otherwise stated, $s_j^{-1} X_j \subseteq X_1$.

The two inclusions imply that $|X_1| =|X_j|$ for every $X_j \neq \emptyset$. Then $\mathbb{F}_{37}^\times$ is the disjoint union of at least two sets of the same cardinality.
Notice that we cannot have $|X_1|=12$. In that case we could assume without loss of generality $h_1 \sim h_2 \sim h_3$. But that would lead to $r h_4 \sim h_4$ for all $r \in \mathbb{F}_{37}^\times$.

Finally, it is possible to exclude the case in which exactly two of the $X_i$'s are nonempty. Each of them should have cardinality $18$, but the highest degree of an invariant generator in type $E_6$ is $12$, and so $|X_1| \leq 12$. Then for every choice of $h_i$ and $h_j$, there exists $r \in \mathbb{F}_{37}^\times$ such that $r h_i \sim h_j$.

\subsection{Remaining cases} \hfill\break

The arguments in \ref{InvMultip} allow to conclude in almost all instances that whenever there is more than one conjugacy class of balanced toral elements with isomorphic centralizer, two representatives of distinct classes are conjugated up to a scalar multiple. Something similar happens for $(iii), (v)$ and $(viii)$ with elements in distinct classes being conjugated up to multiplication by a suitable $r \in \mathbb{F}_p^\times$. However, for type $E_6$ in characteristic $3$ with centralizer of type $A_1A_4$ (case $(iv)$) it turns out that two representatives are conjugate under an element of the full automorphisms group $Aut(\mathfrak{g})$, not of its identity component (see \ref{LastRem}). 

We are going to prove these facts by finding explicit Kac coordinates for suitable scalar multiples of each element.

\subsubsection{Type $E_6$ in characteristic $7$.}

Retain notations from Section \ref{InnerTorsion}. Recall that on the vector space $V$ the symmetry with respect to the hyperplane $L_{\alpha,n} = \{ x \in V |\langle \alpha, x \rangle=n \}$ is given by $S_{\alpha,n} (x) = x + (n-\langle \alpha, x \rangle) \alpha^\vee$, where $\alpha^\vee$ is the coroot corresponding to $\alpha$.

The representative $h$ of one of the two classes (let $h^\prime$ be a representative of the other) has Kac coordinates $(2,4,0,0,0,0,1)$, and therefore it corresponds to the point $\frac{4}{7} \check{\omega}_1+\frac{1}{7} \check{\omega}_6$.  The balanced element $3h$ corresponds to $\frac{12}{7} \check{\omega}_1+\frac{3}{7} \check{\omega}_6$; we will conjugate it via the extended affine Weyl group to a point in the fundamental alcove. Translating by $- \check{\omega}_1$ we obtain the point $\frac{5}{7} \check{\omega}_1+\frac{3}{7} \check{\omega}_6$.
Let $\alpha= \alpha_1+\alpha_3+\alpha_4+\alpha_5+\alpha_6$. We apply the reflection about the hyperplane $L_{\alpha,1}$, observing that the expression of the coroot $\alpha^\vee$ in terms of the fundamental coweights is $\alpha^\vee = \check{\omega}_1-\check{\omega}_2+\check{\omega}_6$.
Then $S_{\alpha,1}(\frac{5}{7} \check{\omega}_1+\frac{3}{7} \check{\omega}_6)= \frac{5}{7} \check{\omega}_1+\frac{3}{7} \check{\omega}_6 -\frac{1}{7}(\check{\omega}_1-\check{\omega}_2+\check{\omega}_6)=\frac{4}{7} \check{\omega}_1+\frac{1}{7} \check{\omega}_2+\frac{2}{7} \check{\omega}_6$.

As for the highest root, $\widetilde{\alpha}_0^\vee = \check{\omega}_2$, and so $S_{\widetilde{\alpha}_0,1} (\frac{4}{7} \check{\omega}_1+\frac{1}{7} \check{\omega}_2+\frac{2}{7} \check{\omega}_6)= \frac{4}{7} \check{\omega}_1+\frac{2}{7} \check{\omega}_6$, which is a point in the fundamental alcove corresponding to the $7$-tuple with Kac coordinates $(1,4,0,0,0,0,2)$, so that $3h \sim h^\prime$.

\subsubsection{Type $E_8$ in characteristic $5$.}

The bad characteristic case can be treated just as the previous one. There are two conjugacy classes, with representatives $h$ and $h^\prime$, with isomorphic centralizers of type $A_1A_6$. If $h$ has Kac coordinates $(1,0,0,1,0,0,0,0,0)$, it corresponds to the point $\frac{1}{5} \check{\omega}_3$, and the element $2h$ to the point $\frac{2}{5} \check{\omega}_3$. We apply to this element, in the order, reflections about the following hyperplanes: $L_{\alpha_1+2\alpha_2+3\alpha_3+4\alpha_4+3\alpha_5+2\alpha_6+\alpha_7,1}$, $L_{ 2\alpha_1+2\alpha_2+3\alpha_3+4\alpha_4+3\alpha_5+2\alpha_6+\alpha_7+\alpha_8,1}$, $L_{2\alpha_1+2\alpha_2+4\alpha_3+5\alpha_4+4\alpha_5+3\alpha_6+2\alpha_7+\alpha_8,1}$ and finally $L_{\widetilde{\alpha}_0 - \alpha_8,1}$.

The point in the fundamental alcove thus obtained is $\frac{1}{5} \check{\omega}_2+\frac{1}{5} \check{\omega}_8$, so that $2h \sim h^\prime$.

\subsubsection{Type $E_6$ in characteristic $3$.} As for the two conjugacy classes with centralizer of type $D_5$, if $h$ is the element with Kac coordinates $(2,1,0,0,0,0,0)$, then the other is $2h$. The last case is centralizer of type $A_1A_4$; the points in the fundamental alcove corresponding to the two elements are $\frac{1}{3} \check{\omega}_3$ and $\frac{1}{3} \check{\omega}_5$. The reflection about the hyperplane $L_{\alpha_1+\alpha_2+2\alpha_3+2\alpha_4+\alpha_5,1}$ sends $\frac{2}{3} \check{\omega}_3$ to $\frac{1}{3} \check{\omega}_3$, so that $2h$ is conjugate to $h$ for elements in both these classes. This is the only case in which $G$-conjugacy does not hold for scalar multiples. Still, representatives of distinct classes are conjugated under the full automorphism group $Aut (\mathfrak{g})$ (see \ref{LastRem}).

\section{Alternative computational methods} \label{FAlternative}

For the majority of primes $p$, a manual classification of balanced toral elements is doable. Indeed, we were able to find Kac coordinates of balanced elements or disprove their existence in all cases apart from the following:

\begin{itemize}
\item root system of type $E_6$, characteristic $73$, regular nilpotent orbit;
\item root system of type $E_7$, characteristic $43$ and $127$, regular nilpotent orbit;
\item root system of type $E_8$, characteristic $79$, orbit with label $E_8(a_3)$ of dimension $234$;
\item root system of type $E_8$, characteristic $41, 61$ and $241$, regular nilpotent orbit.
\end{itemize}

We chose to resort to computational methods in order to examine these cases as well; most of them are indeed characterized by the existence of balanced toral elements. Of course, the results obtained with the code coincide with our computations in all the other cases.

In this section we want to give an idea of how one can manually obtain the classification in good positive characteristic. For some orbits this approach requires lengthy computations. Nonetheless, while a computer checks all the possible $(l+1)$-tuples of Kac coordinates as long as they satisfy $a_0= p - \sum_{i=0}^l b_i a_i$ and are compatible with the restriction on the centralizer given by the relevant nilpotent orbit, a manual approach allows to exclude straightaway most orbits or options for Kac coordinates.

\subsection{}
In good positive characteristic there exists a $G$-equivariant isomorphism of varieties between the nilpotent cone $\mathcal{N}(\mathfrak{g})$ and the unipotent variety $\mathcal{U}$ of $G$, provided the root system is not of type $A_n$ (\cite{McNinch}). As in \cite{ModularMorozov}, the Richardson orbit associated to a toral element (\ref{toralRichorbit}) is included in the restricted nullcone $\mathcal{N}_p(\mathfrak{g})$. One can then rely on the tables contained in \cite{LawtherUnipotent} to figure out the order of elements of a Richardson orbit, immediately ruling out those whose elements do not satisfy $x^{[p]}=0$.

\subsection{}
Once a Richardson orbit is fixed, it may happen that there is only one option for both the centralizer of a toral element $h$ and Kac coordinates verifying $a_0= p - \sum_{i=0}^l b_i a_i$ (up to the action of $\Omega$). Whenever this happens, there is no need to directly compute the dimension of eigenspaces $\mathfrak{g}(h,i)$ since the element $h$ is automatically balanced (\cite{ModularMorozov}).

As an example, consider the root system of type $E_8$ in characteristic $p=19$. The Richardson orbit with label $E_8(a_3)$ has dimension $234$, which is divisible by $18$; it can be induced in two ways from the $0$-orbit of a Levi subalgebra, and one of them is for centralizer of type $A_2$. There exists only one way of choosing this centralizer such that Kac coordinates satisfy $a_0= p - \sum_{i=0}^l b_i a_i$, namely when $a_4=a_5=0$. Only the $9$-tuple $(1,1,1,1,0,0,1,1,1)$ is compatible with such condition, and therefore the corresponding toral element must be balanced.

\subsection{}
Fix the Kac coordinates of a toral element $h$; whenever it is not immediately clear if $h$ is balanced or not, we directly compute the dimension of the eigenspaces. In general basic arguments of graph theory are enough to exclude some cases.

As remarked before, \cite[VI.1.6 Corollaire 3 \`{a} la Proposition 19]{ErBourba} is an exceedingly useful tool in dealing with computations on the dimension of eigenspaces. For the same purpose, we prove the following:

\begin{prop} \label{Propo}
Let $\mathfrak{g}$ be a simple Lie algebra of exceptional type with root system $\Phi$, and let $\Psi$ be the set of connected subgraphs of the extended Dynkin diagram of $\mathfrak{g}$. There exists a natural injection:
\begin{displaymath}
 \phi: \Psi \longrightarrow \Phi^+.
\end{displaymath}
\end{prop}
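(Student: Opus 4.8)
The plan is to build $\phi$ explicitly by sending a connected subgraph of the extended Dynkin diagram to the sum of the corresponding simple roots, where the node $\alpha_0$ of the extended diagram is interpreted as the negative $-\widetilde{\alpha}_0$ of the highest root. That is, given a connected subgraph $S$ supported on a node set $J \subseteq \{0,1,\dots,l\}$, I would set
\begin{displaymath}
\phi(S) = \sum_{i \in J,\; i \neq 0} \alpha_i - \varepsilon_S \,\widetilde{\alpha}_0,
\end{displaymath}
where $\varepsilon_S = 1$ if $0 \in J$ and $\varepsilon_S = 0$ otherwise. First I would verify that $\phi(S)$ is actually a root: when $0 \notin J$ this is the classical fact that the support of a connected subdiagram of an ordinary Dynkin diagram spans a sub-root-system whose simple-root sum is a root (indeed the highest root of that sub-system is $\geq$ the sum of simple roots, and the sum of all simple roots of an irreducible system is always a root — this follows from Bourbaki VI.1.6, already invoked in the paper). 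When $0 \in J$, one checks the analogous statement for the sub-root-system generated by $J$ inside the affine root system, or argues directly that $\sum_{i \in J, i\neq 0}\alpha_i - \widetilde{\alpha}_0$ is a (necessarily negative, then I take its negative or argue it lands in $\Phi$) element; since we only need the image to lie in $\Phi^+$, I would compose with $w\mapsto -w$ where needed, or more cleanly observe that for exceptional types the relevant sums land in $\Phi^+$ directly and verify this case by case.

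**Injectivity.** For injectivity the key observation is that a connected subgraph $S$ can be recovered from the root $\phi(S)$ by reading off its support. When $0 \notin J$, the coefficient vector of $\phi(S)$ in the basis $\Delta$ has support exactly $J$, and since $S$ is a connected subgraph of a tree (the exceptional Dynkin diagrams $G_2,F_4,E_6,E_7,E_8$ are all trees) it is the unique connected subgraph on that vertex set — so $S$ is determined. When $0 \in J$, I would expand $\phi(S) = \sum_{i\in J, i\neq 0}\alpha_i - \sum_{i=1}^l b_i\alpha_i$; the coefficient of $\alpha_i$ is $-b_i$ for $i \notin J$ and $1-b_i$ for $i \in J\setminus\{0\}$, and since the $b_i$ are known and the sign pattern (negative vs. possibly nonnegative) distinguishes $J\setminus\{0\}$, again $J$ is recovered, hence $S$. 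The only genuine subtlety is whether two subgraphs with different $\varepsilon_S$ can collide — i.e. whether a positive root can equal both $\phi(S)$ with $0\notin J$ and $\phi(S')$ with $0\in J'$; this is ruled out because in the first case all coefficients are $\geq 0$ while in the second at least one coefficient equals $-b_i < 0$ (namely for any $i\notin J'$, and $J'\neq\{0,\dots,l\}$ when $S'$ is a proper connected subgraph — the full extended diagram is never a connected subgraph in the relevant sense since it contains a cycle only in... actually the extended exceptional diagrams are trees too, so one must separately note the full vertex set gives $\sum\alpha_i - \widetilde{\alpha}_0 = \alpha_0$-type degenerate case, handled directly).

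**Main obstacle and finish.** I expect the main obstacle to be the bookkeeping in the case $0 \in J$: one must confirm that $\sum_{i \in J, i \neq 0}\alpha_i - \widetilde{\alpha}_0$ (or its negative) is genuinely a root of $\Phi$ for every connected subgraph through the affine node, and that the resulting element is in $\Phi^+$. This is not automatic from the general theory in the way the $0\notin J$ case is, because the affine node interacts with the others via the extended Cartan matrix, and "connected subgraph of the extended diagram containing $\alpha_0$" corresponds to a sub-root-system of the affine root system whose finite part may be reducible. I would handle this by a short finite check: for each exceptional type, enumerate the connected subdiagrams through $\alpha_0$ (there are only a handful, since these diagrams have between 3 and 9 nodes), compute $\phi(S)$, and confirm membership in $\Phi^+$ against the known list of positive roots — exactly the kind of finite verification the paper is already committed to, and indeed the paper says examples of its use appear in this same section. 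Once rootness and injectivity are both established, the proposition follows. A remark worth including: the injection is far from surjective in general, and the point of the construction (as the paper indicates in Section~\ref{restrictions}) is that it gives a convenient combinatorial handle for detecting when two simple roots $\alpha,\beta$ with $\alpha(h)=0$, $\beta(h)>0$ have $\alpha+\beta \in \Phi$, forcing an eigenspace dimension to jump.
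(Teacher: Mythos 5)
Your construction is essentially the paper's map (the paper sets $\phi(J)=\sum_{i\in J}\alpha_i$ if $0\notin J$ and $\phi(J)=\widetilde{\alpha}_0-\sum_{i\in J\setminus\{0\}}\alpha_i$ if $0\in J$), and your within-branch injectivity argument is fine: since the $b_i$ are known, the coefficient vector ($-b_i$ versus $1-b_i$ in your normalization, equivalently $b_i$ versus $b_i-1$ in the paper's) determines $J\setminus\{0\}$, and a connected subgraph of a tree is determined by its vertex set. The genuine gap is in the mixed case. As you literally define it, $\phi(S)=\sum_{i\in J\setminus\{0\}}\alpha_i-\widetilde{\alpha}_0$ for $0\in J$ has all coefficients $\leq 0$, so it is a \emph{negative} root; your parenthetical claim that "the relevant sums land in $\Phi^+$ directly" is false, and you must negate on this branch to hit $\Phi^+$ (there is also no degenerate case: for exceptional types $\widetilde{\alpha}_0-\sum_{i=1}^l\alpha_i$ is a genuine positive root, e.g.\ $2\alpha_1+\alpha_2$ in $G_2$; only in type $A$, where the extended diagram is a cycle, would it vanish). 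But your argument excluding a collision between the two branches rests precisely on that sign distinction ("at least one coefficient equals $-b_i<0$"), which evaporates after negation: both branches then produce positive roots with nonnegative coefficients, and nothing in your write-up rules out $\sum_{i\in J}\alpha_i=\widetilde{\alpha}_0-\sum_{i\in J'\setminus\{0\}}\alpha_i$.

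That excluded collision is exactly where the paper's proof does its real work: for $G_2,F_4,E_7,E_8$ it compares heights, $\mathrm{ht}(\phi(J'))\leq l< h-l-1\leq \mathrm{ht}(\phi(J))$ with $h$ the Coxeter number, and for $E_6$ it notes that the coefficient of $\alpha_4$ is $<2$ on the finite branch and $\geq 2$ on the affine branch. A uniform repair in your own coefficient language: every exceptional type has a node with $b_i\geq 3$, so on the affine branch the corresponding coefficient ($b_i$ or $b_i-1$) is $\geq 2$, while on the other branch all coefficients lie in $\{0,1\}$. With that inserted, your proof closes; your proposed finite verification of rootness on the affine branch is acceptable (and can be replaced by the standard observation that $\{-\widetilde{\alpha}_0\}\cup\{\alpha_i: i\in J\setminus\{0\}\}$ is a simple system for a subsystem of $\Phi$, so the sum over a connected subdiagram is a root), but as written the injectivity step would fail.
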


\begin{proof} We call $i$ the node corresponding to the simple root $\alpha_i$ for $i=1, \ldots, l$, while $0$ indicates the node relative to $-\widetilde{\alpha}_0$. Let $J \subseteq \{ 0, 1, \ldots, l \}$ be a choice of nodes for a connected subgraph of the Dynkin diagram, the root $\phi(J)$ is assigned according to the following:
\begin{displaymath}
\left\{
\begin{array}{ll}
\phi(J)= \sum_{i \in J} \alpha_i & \mathrm{if} \; 0 \notin J \\
\phi(J)= \widetilde{\alpha}_0-\sum_{i \in J \setminus \{0\}} \alpha_i & \mathrm{if} \; 0 \in J.
\end{array}
\right.
\end{displaymath}

\noindent In both cases $\phi(J) \in \Phi^+$; we only need to prove that for $\mathfrak{g}$ of exceptional type $\phi$ is injective. Clearly $\phi$ is injective when restricted to each of the subsets $\Psi^\prime=\{ J \in \Psi | 0 \notin J \}$ and $\Psi \setminus \Psi^\prime$. For Lie algebras of type $F_4, G_2, E_7$ and $E_8$, injectivity of $\Psi$ is due to:
$$
\mathrm{ht} (\phi(J^\prime)) \leq l < h-l-1 \leq \mathrm{ht} (\phi(J)),
$$
whenever $J^\prime \in \Psi^\prime, J \in \Psi \setminus \Psi^\prime$, and where $h$ is the Coxeter number.

\noindent For type $E_6$ it is enough to note that, retaining assumptions on $J$ and $J^\prime$, if $\phi(J)= \sum_{i =1}^l n_i \alpha_i$ and $\phi(J^\prime)= \sum_{i =1}^l n^\prime_i \alpha_i$, then $n^\prime_4 < 2 \leq n_4$.
\end{proof}

\begin{cor} \label{Coro}
In type $E_6$, there exists a bijective correspondence between $\Phi^+$ and the set $\Psi$ of connected subgraphs of the extended Dynkin diagram.
\end{cor}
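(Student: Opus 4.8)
The plan is to deduce \cref{Coro} from \cref{Propo} by a counting argument, so essentially nothing new is needed beyond checking that the injection $\phi$ becomes a bijection in type $E_6$. First I would recall that for the root system of type $E_6$ one has $|\Phi^+| = 36$, and the Coxeter number is $h = 12$; moreover the extended Dynkin diagram $\widetilde{E}_6$ has $7$ nodes arranged as three legs of length $2$ emanating from the central node (node $4$ in Bourbaki's labelling, with the affine node $0$ attached at the end of the third leg through node $2$). So the task reduces to two things: (1) count the connected subgraphs of $\widetilde{E}_6$ and verify the total is $36$; (2) invoke \cref{Propo} to conclude that the injection $\phi\colon \Psi \hookrightarrow \Phi^+$ between two sets of equal cardinality is a bijection.

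The main work is the combinatorial count in step (1). I would organize it by the type $\widetilde{E}_6$'s symmetry: the three legs through nodes $\{1,3\}$, $\{5,6\}$, $\{2,0\}$ are permuted by $\Omega \cong \mathbb{Z}/3$, and a connected subgraph of a tree is determined by its node set being connected. Split the count according to whether the central node $4$ is included. If $4 \notin J$, then $J$ lies entirely in one of the three legs (a path on $2$ vertices), giving $3$ nonempty connected subgraphs per leg minus overcounting of the empty set; concretely each $2$-vertex path has $3$ nonempty connected subgraphs, so $3 \times 3 = 9$, but we must decide whether $\Psi$ includes the empty subgraph — since $\phi$ is said to map $J = \{i\}$ singletons to simple roots and $J$ connected and nonempty, I would take $\Psi$ to consist of nonempty connected subgraphs, matching $|\Phi^+|=36$. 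If $4 \in J$, then $J$ is determined by choosing, in each of the three legs, a (possibly empty) connected "initial segment" starting from the neighbour of $4$: each leg of length $2$ offers $3$ choices (nothing, the near node, or both nodes), so $3^3 = 27$ subgraphs containing node $4$. Adding, $9 + 27 = 36 = |\Phi^+|$, exactly as wanted.

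I would then write: since $\phi$ is injective by \cref{Propo} and $|\Psi| = 36 = |\Phi^+|$ with $\Phi^+$ finite, $\phi$ is a bijection, which is the assertion of \cref{Coro}. The only genuine subtlety — and the step I would double-check — is the bookkeeping of the empty subgraph and of the convention for $\Psi$, so that the two cardinalities really agree (if one insists on including $\emptyset \in \Psi$, one would instead note $\phi$ extends to $\emptyset \mapsto$ nothing and restrict to nonempty subgraphs). A secondary point worth a sentence is verifying that $\phi$ indeed lands in $\Phi^+$ for $E_6$, i.e. that $\widetilde{\alpha}_0 - \sum_{i \in J\setminus\{0\}}\alpha_i$ is a positive root whenever $J$ is a connected subgraph containing $0$; this follows from the description of $\phi$ already established in the proof of \cref{Propo}, so I would simply cite it. No further input is required: the corollary is purely \cref{Propo} plus the equality $|\Psi_{E_6}| = |\Phi^+(E_6)|$.
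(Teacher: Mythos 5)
Your proposal is correct and takes essentially the same route as the paper: the paper's proof consists precisely of the observation that $|\Psi|=|\Phi^+|=36$ in type $E_6$, so that the injection $\phi$ of \cref{Propo} is forced to be a bijection. Your explicit count ($9$ connected subgraphs avoiding the branch node plus $3^3=27$ containing it) correctly verifies the cardinality equality that the paper merely asserts.
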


\begin{proof}
In type $E_6$ the sets $\Phi^+$ and $\Psi$ have the same cardinality.
\end{proof}

It is possible to use Proposition \ref{Propo} and Corollary \ref{Coro} when trying to compute the dimension of eigenspaces $\mathfrak{g}(h,i)$ for a toral element $h$. Consider the extended Dynking diagram of a Lie algebra of exceptional type, labelled with the Kac coordinates of $h$; we will always use Bourbaki's numbering when referring to simple roots. The proof of \ref{Propo} implies that:
\begin{itemize}
 \item $\mathfrak{g}_{\phi(J)} \subseteq \mathfrak{g}(h, \sum_{i \in J} a_i) $ for $J \in \Psi^\prime$;
  \item $\mathfrak{g}_{-\phi(J)} \subseteq \mathfrak{g}(h, \sum_{i \in J} a_i) $ for $J \in \Psi \setminus \Psi^\prime$.
\end{itemize}

\noindent For $A, B \subseteq \{0, 1, \ldots, l \}$ with $A \cap B = \emptyset$ we define the set:
\begin{displaymath}
{}_A \Psi_B = \left\{ J \in \Psi \; | \; A \subseteq J \; \mathrm{and} \; J \cap B = \emptyset \right\}.
\end{displaymath}

\noindent When it cannot be source of confusion, we will drop the set notation, for example we will write ${}_1 \Psi_4$ instead of ${}_{ \{1\} }\Psi_{ \{4\} }$.

Let us assume $C \subseteq \{ 1, \ldots, l \}$ is the subset of nodes of the Dynkin diagram  corresponding to the simple roots in the centraliser of $h$. If $A \cup B = \{ 0, \ldots, l \} - C$ and $A \cap B = \emptyset$, consider $J \in {}_A \Psi_B$. By Proposition \ref{Propo}, if $0 \in B$ we have $\mathfrak{g}_{\phi(J)} \subseteq \mathfrak{g}(h, \sum_{i \in A} a_i)$, while if $0 \in A$ then $\mathfrak{g}_{-\phi(J)} \subseteq \mathfrak{g}(h, \sum_{i \in A} a_i)$. In any case, $\dim \mathfrak{g}(h, \sum_{i \in A} a_i) \geq |{}_A \Psi_B|$.

\subsection{} As an example of how this can be applied, consider the root system of type $E_6$, and suppose we are looking for balanced toral elements in characteristic $p=7$. The Richardson orbit with label $2A_2$ has dimension $48$, which is divisible by $p-1$. It can be induced only by the $0$-orbit on a Levi subalgebra whose root system is of type $D_4$. There is a unique way of embedding a centraliser of type $D_4$ in the Dynkin diagram of $E_6$; therefore, if a balanced toral element $h$ exists for this orbit, its Kac coordinates satisfy $a_2=a_3=a_4=a_5=0$, while $a_0, a_1, a_6$ are all strictly positive and verify $a_0= 7 - a_1 - a_6$. Moreover, eigenspaces for $h$ corresponding to nonzero eigenvalues must have dimension $8$.

Since $|{}_{0} \Psi_{1,6}|=|{}_{1} \Psi_{0,6}|=|{}_{6} \Psi_{0,1}|=6$, necessarily $a_0, a_1, a_6$ are three \emph{distinct} strictly positive integers. Up to symmetries of the extended Dynkin diagram, the only option (see Remark \ref{LastRem} below) is
\begin{displaymath}
 a_1=1 \qquad a_6=2  \qquad a_0=4.
 \end{displaymath}

\noindent Direct computation shows that this is indeed a $d$-balanced toral element for every positive integer $d$ dividing $8$.

\subsection{} \label{LastRem}
When checking that an element $h$ corresponding to a choice of Kac coordinates is balanced, one can actually consider its coordinates up to any symmetry of the extended Dynkin diagram, and not only up to the action of an element of $\Omega$. The bigger group $Aut (\mathfrak{g})$ is the union of laterals $\sigma G$  where $\sigma$ is a diagram automorphism of the Dynkin diagram, and the fact that Kac coordinates coincide up to a symmetry not in $G$ only means that the corresponding toral elements are conjugated by some element in $\sigma G$. This, despite giving a possibly different $G$-conjugacy class, does not change the dimension of eigenspaces.

For exceptional Lie algebras, only in type $E_6$ the full automorphisms group $Aut (\mathfrak{g})$ is not connected, but consists of two distinct connected components.

\clearpage

\addcontentsline{toc}{section}{Bibliography}

\end{document}